\newtheorem{theorem}{Theorem}
\newtheorem{definition}[theorem]{Definition}
\newtheorem{lemma}[theorem]{Lemma}
\newcommand{\NN}{\mathbb N}
\newcommand{\ZZ}{\mathbb Z}
\renewcommand{\epsilon}{\varepsilon}
\begin{document}
\title{A Solution to the 1-2-3 Conjecture}
\author{Ralph Keusch \\ \small{ralphkeusch@gmail.com}}
\date{\today}
\maketitle
\begin{abstract}
We show that for every graph without isolated edge, the edges can be assigned weights from $\{1,2,3\}$ so that no two neighbors receive the same sum of incident edge weights. This solves a conjecture of Karo\'{n}ski, {\L}uczak, and Thomason from 2004.
\end{abstract}

\section{Introduction}\label{section:introduction}
Let $G=(V,E)$ be a simple graph. A $k$-edge-weighting is a function $\omega: E \rightarrow \{1, \ldots, k\}$. Given an edge-weighting $\omega$, for each vertex $v \in V$ we denote by $s_{\omega}(v):=\sum_{w \in N(v)} \omega(\{v,w\})$ its \emph{weighted degree}. We say that two vertices $v,w \in V$ have a coloring conflict if $s_{\omega}(v)=s_{\omega}(w)$ and $\{v,w\} \in E$. If there is no coloring conflict in the graph, $\omega$ is called \emph{vertex-coloring}. We are interested to find the smallest integer $k$ that admits a vertex-coloring $k$-edge-weighting for the graph $G$. This question arose as the local variant of the graph irregularity strength problem, where one seeks to find a $k$-edge-weighting so that \emph{all} nodes receive different weighted degrees \cite{chartrand1986irregular}. In 2004, Karo\'{n}ski, {\L}uczak, and Thomason conjectured that for each connected graph with at least two edges, a vertex-coloring $3$-edge-weighting exists \cite{karonski2004edge}. Soon after, the problem was referred to as 1-2-3 Conjecture and gained a lot of attention due to its elegant statement.

Karo\'{n}ski et al.\ verified the conjecture for $3$-colorable graphs \cite{karonski2004edge}. Afterwards, Addario-Berry, Dalal, McDiarmid, Reed, and Thomason provided the first finite, general upper bound of $k=30$ \cite{addarioberry2007vertexcolouring}. The general result was improved to $k=16$ by Addario-Berry et al.\ \cite{addarioberry2008degree} and further to $k=13$ by Wang and Yu \cite{wang2008onvertex}. In 2010, Kalkowski, Karo\'{n}ski, and Pfender made a big step and proved upper bounds of $k=6$ and $k=5$, using a simple algorithmic argument \cite{kalkowski2009vertex, kalkowski2010vertexcoloring}.

More results have been dedicated to specific graph classes. For $d$-regular graphs, a bound of $k=4$ has been proven for $d \le 3$ \cite{karonski2004edge}, for $d=5$ \cite{bensmail2018result}, and then in general \cite{przybylo2021theconjecture}. Furthermore, Przyby{\l}o gave an affirmative answer to the conjecture for $d$-regular graphs, given that $d \ge 10^8$ \cite{przybylo2021theconjecture}. In addition, the conjecture was confirmed by Zhong for ultra-dense graphs, i.e., for all graphs $G=(V,E)$ where the minimum degree is at least $0.99985|V|$ \cite{zhong2018theconjecture}. Recently, Przyby{\l}o asserted the statement as well for all graphs where the minimum degree is sufficiently large \cite{przybylo2020conjecture}. Concretely, by applying the Lov\'{a}sz Local Lemma, he proved that there exists a constant $C>0$ such that the conjecture holds for all graphs with $\delta(G)\ge C\log\Delta(G)$.  However, not always $3$ weights are necessary. For instance, a random graph $G(n,p)$ asymptotically almost surely admits a $2$-edge-weighting without coloring conflicts \cite{addarioberry2008degree}. For all $d$-regular bipartite graphs \cite{chang2011vertex}, Chang, Lu, Wu, and Yu have shown that $k=2$ is possible as well, given that $d \ge 3$. Regarding the computational complexity, Dudek and Wajc proved that it is NP-complete to determine whether a given graph $G$ supports a vertex-coloring $2$-edge-weighting \cite{dudek2011complexity}, whereas the same decision problem is in $P$ for bipartite graphs \cite{thomassen2016flow}. 

Many closely related problems have been analyzed. A natural variant are total weightings as introduced by Przyby{\l}o and Wo\'{z}niak \cite{przybylo2010conjecture}, where the edges receive weights from $\{1, \ldots, k\}$ as before, but additionally each vertex gets a weight from $\{1, \ldots, \ell\}$. The weighted degree of a vertex is then defined as the sum of all incident edge weights, plus the weight that it received itself. Przyby{\l}o and Wo\'{z}niak conjectured that for each graph there exists a vertex-coloring total weighting with vertices and edges weighted by the set $\{1,2\}$. While this question is still open, a simple argument from Kalkowski shows that each graph admits a vertex-coloring total weighting with vertex weights from $\{1,2\}$ and edge weights from $\{1,2,3\}$ \cite{kalkowski2009note}.

A weaker version of vertex-coloring edge-weightings can be obtained by defining the vertex colors as multisets of incident edge-weights, instead of sums \cite{karonski2004edge, addarioberry2005vertex}. Recently, Vu\v{c}kovi\'{c} reached the optimal bound $k=3$ for this variant \cite{vuckovic2018multiset}. Vice versa, a harder variant are list colorings, where each edge $e \in E$ has its own list $L(e)$ of allowed edge-weights \cite{bartnicki2009weight}. In fact, the application of Alon's Nullstellensatz \cite{alon1999combinatorial} led to significant results on this intriguing problem \cite{seamone2014bounding, cao2021total, zhu2022every} and on its variant for total weightings \cite{przybylo2011total, wong2011total, wong2014every}. Many more variations of vertex-coloring edge-weightings have been studied, e.g., variations for hypergraphs \cite{kalkowski2017conjecture, bennett2016weak} or directed graphs \cite{bartnicki2009weight, khatirinejad2011digraphs}. For a general overview of the progress on the $1$-$2$-$3$ Conjecture and on related problems, we refer to the early survey of Seamone \cite{seamone2012theconjecture} and to the recent survey of Grytczuk \cite{grytczuk2020from}.

Turning back to the original question, the general upper bound was recently lowered to $k=4$ \cite{keusch2023vertex}. With the present paper, we close the final gap and confirm the conjecture.

\begin{theorem}\label{thm:main}
Let $G=(V,E)$ be a graph without connected components isomorphic to $K_2$. Then there exists an edge-weighting $\omega:E \rightarrow \{1,2,3\}$ such that for each edge $\{v,w\} \in E$,
$$\sum_{u \in N(v)}\omega(\{u,v\}) \neq \sum_{u \in N(w)}\omega(\{u,w\}).$$
\end{theorem}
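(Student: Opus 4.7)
The plan is to extend the algorithmic framework of Kalkowski, Karo\'{n}ski, and Pfender \cite{kalkowski2010vertexcoloring} together with the recent $k=4$ argument of \cite{keusch2023vertex}. In these approaches one processes the vertices of $G$ in some order, commits to weights of ``forward'' edges, and uses ``backward'' edges as a correction budget to ensure that each vertex ends up with a weighted degree different from its already-processed neighbors. The qualitative difficulty in dropping from $k=4$ to $k=3$ is that each backward edge now admits only two non-trivial adjustments, so the correction budget is very tight and must be planned in advance.

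First I would reduce to a connected graph of minimum degree at least two: leaves can be peeled off and assigned in a final clean-up step, and the $K_2$ components excluded by hypothesis are the obvious obstruction. Next I would fix an auxiliary skeleton inside $G$ -- a spanning tree, or more likely a refined structure such as a sparse near-matching or a carefully chosen spanning forest -- separating $E(G)$ into a small fine-tuning part $T$ and a bulk part $E\setminus T$. I would assign the bulk edges first, either deterministically or via a probabilistic construction in the spirit of Przyby\l{}o \cite{przybylo2020conjecture} combined with the Lov\'{a}sz Local Lemma, arranging that every vertex retains enough admissible values of $s_\omega$ and that the already-fixed sums of its neighbors occupy at most two distinct values.

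The main step is then a leaf-to-root sweep along $T$: at each vertex $v$ the unique tree edge to its parent provides up to three possible values of $s_\omega(v)$, so as long as the invariant above holds, one of these choices avoids every forbidden value. The principal obstacle I expect is maintaining this invariant globally in the presence of rigid local configurations -- twin pairs, short even cycles, and vertices whose neighborhood sums are almost homogeneous. These structures have to be singled out beforehand and resolved by bespoke local rearrangements, and the bulk assignment must be engineered so that such pockets of rigidity remain isolated from one another. Finally, one must check the small-graph base cases together with the degenerate components (bipartite components where the parity of $s_\omega$ imposes an additional constraint, and components where all degrees are small and the invariant breaks down) by direct inspection; combining this probabilistic bulk assignment, a deterministic tree sweep, and a careful case analysis of obstructing substructures is what I expect will push the bound all the way down to $k=3$.
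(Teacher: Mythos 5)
Your proposal is a research plan rather than a proof, and the step on which everything hinges is left entirely unresolved. The load-bearing claim is that the bulk assignment can be ``arranged'' so that, when the tree sweep reaches a vertex $v$, the already-fixed weighted degrees of its neighbors occupy at most two distinct values, so that the three choices on the parent edge suffice. No mechanism is offered for this, and it is exactly the obstruction that stalled the Kalkowski--Karo\'{n}ski--Pfender sweep at $k=5$: a vertex may have many processed neighbors with many distinct sums, a single tree edge shifts $s_\omega(v)$ through only three values, and with weights in $\{1,2,3\}$ each backward edge gives a correction budget of at most $\pm 2$ that cannot be spent without disturbing the other endpoint. Saying that rigid configurations ``have to be singled out beforehand and resolved by bespoke local rearrangements'' names the difficulty without addressing it. The probabilistic ingredient does not rescue this either: the Lov\'{a}sz Local Lemma argument of Przyby{\l}o needs $\delta(G) \ge C\log\Delta(G)$, so it says nothing about trees, cycles, cubic graphs, or any sparse instance, which are precisely the cases a complete proof must handle.

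For comparison, the paper's proof does not process vertices along a tree at all. It partitions $V$ into an independent set $R$ and its complement $B$ (with $|B|$ even), separates parities -- red vertices get even weighted degrees, blue vertices odd -- so that red--blue and red--red conflicts are impossible by construction, assigns each blue vertex a \emph{designated} odd color via a maximum cut of $G[B]$ together with a flow/augmenting-path argument (the $\{2,3\}$-weighting machinery of the $k=4$ paper) so that the achieved and designated values differ by exactly $1$, and then delivers the missing $\pm 1$ through the bipartite graph $G(R,B)$ by a spanning-tree parity argument that controls the parity of every red vertex simultaneously. The remaining work is a case analysis forced by the requirement that $|B|$ be even. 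None of these ideas -- the independent-set/parity decomposition, the designated colors realized up to $\pm 1$ by a flow, or the global parity correction across the bipartite interface -- appears in your proposal, and without something playing their role the sweep you describe cannot be made to close.
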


In Section~\ref{section:mainideas}, we give an overview of the proof strategy, describe how the proof is elaborated upon the ideas from \cite{keusch2023vertex}, and collect several auxiliary results. Afterwards, in Section~\ref{section:main} we formally prove the theorem. Finally, we conclude with a few remarks in Section~\ref{section:remarks}.

\section{Main ideas and proof preparations}\label{section:mainideas}

We use the following notation. Let $G=(V,E)$ be a graph, let $W \subseteq V$, and let $C=(S,T)$ be a cut. Then we denote by $E(W)$ the edge set of the induced subgraph $G[W]$ and by $E(S,T)$ the subset of edges having an endpoint in both $S$ and $T$ (the cut edges of $C$). For a vertex $v \in V$, $N(v)$ stands for its neighborhood and $\deg_W(v) := |N(v) \cap W|$ is the number of neighbors in $W$. Finally, for two disjoint subsets $S, T \subseteq V$, denote by $G(S,T)$ the bipartite subgraph with vertex set $S \cup T$ and edge set $E(S,T)$.

As a starting point, let us summarize the strategy that was introduced in \cite{keusch2023vertex} to construct a vertex-coloring edge-weighting with weights $\{1,2,3,4\}$. There, we started with a maximum cut $C=(S,T)$ and initial weights from $\{2,3\}$, making the weighted degrees of nodes in $S$ even and those of nodes in $T$ odd. Depending on the remaining coloring conflicts, an auxiliary flow problem on $G(S,T)$ was carefully designed. Then, the resulting maximum flow yielded a collection of edge-disjoint paths, along which the edge-weights could be changed in order to make the edge-weighting vertex-coloring.

We are going to extend that approach as follows. We partition the vertex set into two sets $R$ and $B$ of \emph{red} and \emph{blue} nodes, where the red vertices form an independent set. We start by giving each edge weight $2$ and apply the strategy from \cite{keusch2023vertex} only to the subgraph $G[B]$, consequently only taking a maximum cut $C=(S,T)$ of $G[B]$. However, when putting the weights onto $E(B)$, we do not yet finalize the vertex-weights of the blue nodes. Instead, we only ensure that there are no coloring conflicts inside $S$ and inside $T$, which is possible with the weight set $\{1,2,3\}$. Afterwards, we cautiously construct a weighting for $E(R,B)$ such that the weighted degrees of vertices in $R$ remain even, but the weighted degrees of the blue nodes become odd, without creating new conflicts inside $S$ and $T$. More precisely, the weighted degree of nodes in $S$ should obtain values $1 \pmod{4}$ and those of $T$ obtain values $3 \pmod{4}$. Consequently, all coloring conflicts will be resolved and the edge-weighting becomes vertex-coloring.

Unfortunately, our construction requires that the set $B$ is of even cardinality, forcing us to handle several different situations when proving Theorem~\ref{thm:main} in Section~\ref{section:main}. In some cases, the described strategy only works when one or even two vertices are removed from the graph. Afterwards, when re-inserting the nodes, augmenting the edge-weighting to the full graph sometimes requires an additional round of weight modifications, for instance along a path $p$.

We now start with the formal preparations for the proof. To find a suitable independent set $R$ of red nodes, we will apply the following simple result.

\begin{lemma}\label{lemma:divide}
Let $G=(V,E)$ be a connected graph, let $v,w \in V$, and let $p$ be a shortest $v$-$w$-path. Then there exists an independent set $R \subseteq V$ such that
\begin{enumerate}[(i)]
\item the graph $G(R,V \setminus R)$ is connected, and
\item the path $p$ is alternating between $R$ and $V \setminus R$, where we can choose whether $v \in R$ or $v \in V \setminus R$.
\end{enumerate}
\end{lemma}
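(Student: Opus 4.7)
My plan is to construct $R$ by a greedy vertex-by-vertex two-coloring using ``red'' for $R$ and ``blue'' for $V \setminus R$. The guiding idea is to process the shortest path $p = v_0 v_1 \cdots v_k$ (with $v_0=v$, $v_k=w$) first and then to extend to the rest of the graph; the fact that $p$ is \emph{shortest} will force alternation along $p$ almost for free.

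Concretely, I would fix an enumeration $u_1, u_2, \ldots, u_n$ of $V$ such that $(u_1,\ldots,u_{k+1}) = (v_0,\ldots,v_k)$, and the remaining vertices $u_{k+2},\ldots,u_n$ are ordered so that each of them has at least one neighbor among its predecessors; such an ordering exists because $G$ is connected, e.g.\ via BFS started from $V(p)$. I then color $u_1$ red or blue according to the lemma's choice, and for each subsequent $u_i$ I apply the rule: if $u_i$ has a red neighbor among $u_1,\ldots,u_{i-1}$, color $u_i$ blue; otherwise, color $u_i$ red. Finally I set $R$ to be the red vertices.

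Verifying property (i) splits into two parts. Independence of $R$ is immediate: if $u_i,u_j$ were both red with $i<j$ and adjacent, the rule would have colored $u_j$ blue, a contradiction. For connectedness of $G(R,V\setminus R)$, I would show by induction on $i$ that the subgraph on $\{u_1,\ldots,u_i\}$ formed by the red-blue edges is connected. Whenever a newly added vertex is colored blue it is joined to the red side by the witnessing red predecessor, and whenever a newly added vertex is colored red the rule guarantees that all its processed neighbors are blue, while the chosen order guarantees at least one such neighbor exists, so the new red vertex attaches to the existing bipartite subgraph.

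The only delicate step, and where the shortest-path hypothesis bites, is property (ii). The crucial observation is that a shortest path has no chords: if $v_a \sim v_b$ with $|a-b|\ge 2$, concatenating $v_0\cdots v_a v_b \cdots v_k$ would contradict $d(v,w)=k$. Hence when $v_{i-1}$ is processed, its only processed neighbor among $v_0,\ldots,v_{i-2}$ is $v_{i-2}$, and because $p$ is processed first no other vertex has yet been colored. The rule therefore assigns $v_{i-1}$ the opposite color of $v_{i-2}$, and induction yields a perfectly alternating $p$ whose parity is dictated by the chosen color of $v_0$. I expect this chord-free argument---the interplay between the shortest-path hypothesis and the prefix ordering of the greedy process---to be the main conceptual point, while the rest reduces to routine inductive bookkeeping.
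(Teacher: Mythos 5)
Your proposal is correct and follows essentially the same route as the paper: process the shortest path first (using its chord-freeness to force alternation and independence there), then extend greedily in a connectivity-respecting order by the rule ``add $u_i$ to $R$ iff it has no earlier neighbor already in $R$,'' which is exactly the paper's rule. The only cosmetic difference is that the paper hard-codes the alternation on $p$ by taking every second vertex, whereas you derive the same coloring of $p$ from the greedy rule via the no-chord observation.
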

\begin{proof}
Let $p=\{v_1:=v, v_2, \ldots, v_k:=w\}$ be a shortest $v$-$w$-path in $G$. If $v=v_1$ is required to be in $R$, we start with $R:=\{v_1\}$, otherwise we start with $R:=\emptyset$. Next, we put every second vertex of $p$ into $R$. Because $p$ is a shortest $v$-$w$-path, $R$ remains an independent set and the required properties hold at least for the induced subgraph $G[\{v_1, \ldots, v_k\}]$.

Let $v_{k+1}, \ldots, v_n$ be an ordering of the remaining vertices of $V$ (if there are any), such that for each $i>k$, $v_i$ has at least one neighbor $v_j$ with $j<i$. We are going to proceed the remaining vertices one after another, thereby extending $R$, and prove by induction that for each $i>k$, the graph $G[\{v_1, \ldots, v_{i}\}]$ achieves property (i).

Consider a vertex $v_i$ and assume that $G[\{v_1, \ldots, v_{i-1}\}]$ satisfies the precondition. If $v_i$ already has a neighbor $v_j \in R$, we do not extend $R$, otherwise we extend the set $R$ by adding the node $v_i$. In both cases, $R$ obviously remains an independent set and $G(R, \{v_1, \ldots, v_i\} \setminus R)$ is connected, thus the statement follows by induction.
\end{proof}

Once having partitioned the vertex set into the independent set $R$ of red nodes and the set $B:=V \setminus R$ of blue nodes, we will start assigning weights to the edges $E(B)$. At the same time, we introduce an odd-valued \emph{designated color} $f(v)$ for each blue vertex $v \in B$, so that the function $f$ is a proper vertex-coloring. We thereby take into account that the blue-red-edges contribute to the weighted degrees as well. Because we only have edge-weights $\{1,2,3\}$ available, our capabilities are limited and we have to keep the weighted degrees in $B$ even for the moment. But we can construct the edge-weighting of $E(B)$ so that the current weights and the designated colors almost coincide and only differ by $1$. Later, we will overcome the remaining differences when carefully assigning edge-weights to $E(R,B)$. 

With the following lemma, we adapt the key ideas from \cite{keusch2023vertex} to our setting. We will typically apply it to the subgraph $G[B]$ and to the function $h(v):=2\deg_R(v)$, to obtain an edge-weighting $\omega$ of $E(B)$ and a function of designated colors $f: B \rightarrow \{1,3,5, \ldots\}$. Recall that we denote by $s_{\omega}(v)$ the weighted degree of a vertex $v$ under the weighting $\omega$.

\begin{lemma}\label{lemma:weighting}
Let $G=(V,E)$ be a not necessarily connected graph and let $h:V \rightarrow \{0, 2, 4, \ldots\}$ be a function attaining only even values. Then there exists an edge-weighting $\omega: E \rightarrow \{1,2,3\}$ and a function $f:V \rightarrow \{1, 3, 5, \ldots\}$ attaining only odd values such that 
\begin{enumerate}[(i)]
\item $f(v) \neq f(w)$ for each edge $\{v,w\} \in E$, and
\item $|s_{\omega}(v) + h(v) - f(v)| = 1$ for all $v \in V$.
\end{enumerate}
\end{lemma}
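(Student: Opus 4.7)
The plan is to adapt the maximum-cut + flow strategy of~\cite{keusch2023vertex}, as sketched in the overview. Since the lemma allows $G$ to be disconnected, I would treat each connected component separately; isolated vertices are trivial, as $f(v) := h(v) + 1$ (or $f(v) := 1$ when $h(v) = 0$) satisfies the conditions. For each nontrivial component, take a maximum cut $(S, T)$ and initialize $\omega(e) := 2$ for every $e \in E$. This makes $a(v) := s_{\omega}(v) + h(v) = 2\deg(v) + h(v)$ even for every $v$. Tentatively set $f(v)$ to whichever of $a(v) + 1$ or $a(v) - 1$ satisfies $f(v) \equiv 1 \pmod{4}$ when $v \in S$ and $f(v) \equiv 3 \pmod{4}$ when $v \in T$. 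Condition~(ii) is then satisfied automatically, and every cut edge $\{u, v\}$ is already conflict-free because $f(u) \not\equiv f(v) \pmod{4}$.

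The remaining task is to eliminate potential conflicts on edges inside $S$ or inside $T$. For adjacent $u, v \in S$, the definition of $f$ forces $f(u) = f(v)$ precisely when $\{a(u), a(v)\} \subseteq \{4k, 4k+2\}$ for some $k \geq 0$, and similarly inside $T$. To repair such a conflict, I would shift $a(u)$ (or $a(v)$) by $\pm 2$ across a block boundary $\{4k, 4k+2\} \to \{4k-2, 4k-4\}$ or $\{4k+4, 4k+6\}$: shifts within the same block leave $f(u)$ unchanged, whereas shifts that cross a boundary change $f(u)$ by exactly $\pm 4$. A shift of $s_{\omega}(u)$ by $\pm 2$ is realised by simultaneously changing the weights of two cut edges incident to $u$ from $2$ to $3$ or from $2$ to $1$, which preserves the evenness of $s_{\omega}(u)$ but induces an unwanted $\pm 1$ shift at each of the two opposite endpoints in $T$. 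To neutralise those side effects, the modifications have to be chained along edge-disjoint walks in the bipartite cut graph $G(S,T)$ with alternating $+1$/$-1$ changes, so that every interior vertex of a walk receives a zero net change and only the two endpoints absorb the intended shift.

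Following~\cite{keusch2023vertex}, I would encode the whole process globally as an auxiliary integer-flow problem on $G(S,T)$: the conflicted vertices inside $S$ (respectively inside $T$) act as sources and sinks according to the direction in which their $a$-values must be moved, each cut edge receives capacity~$1$, and the maximum-cut property $\deg_T(v) \geq \deg_S(v)$ for $v \in S$ (and the symmetric inequality in $T$) supplies enough capacity at every vertex for a feasible integer flow to exist. Decomposing that flow into edge-disjoint paths (and possibly cycles) and applying the alternating weight change along each of them yields the desired $\omega : E \to \{1, 2, 3\}$, and the shifted $a$-values leave $f$ proper. The main obstacle, exactly as in~\cite{keusch2023vertex}, is the careful design of supplies, demands, and capacities so that (a) the required flow always exists and (b) the resulting weight modifications resolve every conflict without introducing new ones; this bookkeeping, together with the verification that the shifted $f$ still takes values in $\{1, 3, 5, \ldots\}$, is the technical heart of the lemma.
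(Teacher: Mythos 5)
Your proposal correctly identifies the framework the paper uses (maximum cut, initial weight $2$, residues $1$ and $3$ modulo $4$ on the two sides, and alternating weight changes along edge-disjoint paths in $G(S,T)$ organized as a flow), but it stops exactly where the work begins: you yourself defer the design of supplies, demands and capacities as ``the technical heart of the lemma,'' so what you have is an outline rather than a proof. Two concrete ideas are missing. First, the paper does \emph{not} tentatively set $f(v)\in\{a(v)-1,a(v)+1\}$ and then repair intra-side conflicts afterwards; it fixes the proper coloring $f$ greedily \emph{before} any weight is touched. Processing the vertices in an arbitrary order, for $v_i$ it considers the $2k_i+2$ odd values within distance $2k_i+1$ of $s(v_i):=h(v_i)+2\deg(v_i)$, where $k_i$ is the number of already-processed neighbors on the same side of the cut: exactly $k_i+1$ of these have the prescribed residue mod $4$ and at most $k_i$ are blocked by earlier neighbors, so a conflict-free choice with $|f(v_i)-s(v_i)|\le 2k_i+1$ always exists. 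Your repair-as-you-go scheme instead shifts $a(u)$ by $\pm 2$ to separate $u$ from one neighbor, which moves $f(u)$ by $\pm 4$ and can collide with another same-side neighbor; you give no argument that this cascading process terminates or can be scheduled consistently.

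Second, the bound $|f(v_i)-s(v_i)|\le 2k_i+1$ is not incidental: it gives $|g(v_i)|\le k_i$, which is precisely what allows the paper to select $|g(v_i)|$ edges to \emph{earlier same-side} neighbors as the set $F$ fed into Lemma~\ref{lemma:flow}. It is these intra-side edges, together with the flow corrections routed through the cut edges, that realize the shift $2g(v_i)$ while cancelling the unwanted $\pm 1$ side effects at interior vertices --- not pairs of cut edges incident to $u$ as in your sketch. Without an a priori bound on the displacement there is no reason the flow instance you describe is feasible, and the inequality $\deg_T(v)\ge\deg_S(v)$ alone does not establish it: Lemma~\ref{lemma:flow} is proved in \cite{keusch2023vertex} by a min-cut argument against the maximality of $C$, not by a local capacity count. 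So the strategy is the right one, but the lemma is not proved.
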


We prove the lemma with the flow-based strategy that was introduced in \cite{keusch2023vertex}. A key step towards the statement is the following auxiliary result.

\begin{lemma}[Lemma~2 in \cite{keusch2023vertex}]\label{lemma:flow}
Let $G=(V,E)$ be a graph, let $C=(S,T)$ be a maximum cut of $G$, let $F\subseteq E(S) \cup E(T)$,  and let $\sigma$ be an orientation of the edge set $F$. Furthermore, let $G_{C, F, \sigma}$ be the auxiliary directed multigraph network constructed as follows.
\begin{enumerate}[(i)]
\item As vertex set, take $V$, and add a source node $s$ and a sink node $t$.
\item For each edge $\{u,v\} \in E(S,T)$, insert the two arcs $(u,v)$ and $(v,u)$, both with capacity $1$.
\item For each edge $\{u,v\} \in F$ with corresponding orientation $(u,v) \in \sigma$, insert arcs $(s, u)$ and $(v, t)$, both with capacity $1$, potentially creating multi-arcs. Do not insert $(u,v)$.
\end{enumerate}
Then in the network $G_{C, F, \sigma}$, there exists an $s$-$t$-flow of value $|F|$.
\end{lemma}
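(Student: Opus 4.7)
My plan is to apply the max-flow min-cut theorem: it suffices to show that every $s$-$t$ cut in $G_{C,F,\sigma}$ has capacity at least $|F|$.

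Fix an arbitrary $s$-$t$ cut $(A,\bar A)$ with $s\in A$ and $t\in\bar A$, and set $A':=A\setminus\{s\}$ and $\bar A':=\bar A\setminus\{t\}$, so that $(A',\bar A')$ partitions $V$. The capacity decomposes into two parts. The paired arcs $(u,v),(v,u)$ inserted for each $\{u,v\}\in E(S,T)$ together contribute $1$ if $\{u,v\}$ crosses the partition $(A',\bar A')$ and $0$ otherwise, for a total of $|E(A',\bar A')\cap E(S,T)|$. For each $(u,v)\in\sigma$, the source/sink arcs $(s,u)$ and $(v,t)$ contribute $\mathbf{1}_{u\in\bar A'}+\mathbf{1}_{v\in A'}$; summing these after a four-case split on the positions of $u,v$ yields $|F|+\beta-\alpha$, where $\alpha:=|\{(u,v)\in\sigma:u\in A',\,v\in\bar A'\}|$ and $\beta:=|\{(u,v)\in\sigma:u\in\bar A',\,v\in A'\}|$. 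Hence it suffices to prove $|E(A',\bar A')\cap E(S,T)|\geq\alpha-\beta$, and I will in fact establish the stronger bound $|E(A',\bar A')\cap E(S,T)|\geq\alpha+\beta=|F\cap E(A',\bar A')|$.

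The key step invokes the maximality of $C$ through a swap argument. Let $C'=(S',T')$ be the cut obtained from $(S,T)$ by flipping the side of every vertex lying in $A'$. A direct check shows that an edge of $E(S,T)$ crosses $C'$ iff its endpoints lie on the same side of $(A',\bar A')$, whereas an edge within $E(S)\cup E(T)$ crosses $C'$ iff its endpoints lie on opposite sides of $(A',\bar A')$. Combining this classification with the maximum-cut inequality $|E(S',T')|\leq|E(S,T)|$ rearranges directly into
$$|E(A',\bar A')\cap(E(S)\cup E(T))|\;\leq\;|E(A',\bar A')\cap E(S,T)|.$$
Since $F\subseteq E(S)\cup E(T)$, the left-hand side dominates $|F\cap E(A',\bar A')|=\alpha+\beta$, which yields the desired inequality.

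I expect the swap bookkeeping to be the only delicate step, since the whole argument hinges on correctly classifying how edges in each of $E(S,T)$, $E(S)$, and $E(T)$ behave after flipping $A'$; the remaining ingredients — the capacity computation, the four-case split for the source/sink arcs, and the appeal to max-flow min-cut — are straightforward.
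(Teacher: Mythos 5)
Your proof is correct: the capacity decomposition, the four-case count giving $|F|+\beta-\alpha$ for the source/sink arcs, and the swap argument showing $|E(A',\bar A')\cap(E(S)\cup E(T))|\le|E(A',\bar A')\cap E(S,T)|$ via the maximality of $C$ all check out, and together they give min-cut capacity at least $|F|$. The paper itself imports this lemma from \cite{keusch2023vertex} without reproving it, but your argument is essentially the intended one (max-flow min-cut combined with deriving a strictly larger cut from any deficient $s$-$t$ cut), as also indicated by the discussion in the concluding remarks.
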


\begin{proof}[Proof of Lemma~\ref{lemma:weighting}]Let $G=(V,E)$ be a graph and let $h:V \rightarrow \{0, 2, 4, \ldots\}$. In a first step, we define designated colors $f(v)$ of odd parity such that two neighbors always receive distinct colors. Afterwards, we construct the edge-weighting $\omega$ that satisfies (ii).

Let $\{v_1, \ldots, v_n\}$ be an arbitrary ordering of $V$ and let $C=(S,T)$ be a maximum cut of $G$. We assign the designated colors $f(v_i)$ to all vertices one after another. We aim to define designated colors such that all $v_i \in S$ receive a color $f(v_i) \equiv 1 \pmod{4}$ and each $v_i \in T$ receives a color $f(v_i) \equiv 3 \pmod{4}$. 

Consider a vertex $v_i \in V$, assume that $v_1, \ldots, v_{i-1}$ already got a designated color, and let $s(v_i) := h(v_i)+2\deg(v_i)$. Denote by $k_i \ge 0$ the number of neighbors $v_j$ with $j<i$ that are on the same side of the cut as $v_i$. To define a suitable value $f(v_i)$, we consider the following set of $2k_i+2$ odd values:
$$S(v_i) := \{s(v_i)-2k_i-1, \ldots, s(v_i)-1, s(v_i)+1, \ldots, s(v_i)+2k_i+1\}.$$

We shall choose a value for $f(v_i)$ from $S(v_i)$. Because the value of $f(v_i) \pmod{4}$ is already determined, half of the elements from $S(v_i)$ are not allowed, so $k_i+1$ potential choices are remaining. At most $k_i$ of them are blocked by values $f(v_j)$ from the neighbors $v_j$ of $v_i$ with a smaller index. Hence, there exists at least one suitable value $f(v_i) \in S(v_i)$ with the following properties:
\begin{itemize}
\item $f(v_i) \neq f(v_j)$ for all neighbors $v_j$ of $v_i$ with $j<i$,
\item $f(v_i) \equiv 1 \pmod{4}$ if $v_i \in S$ and $f(v_i) \equiv 3 \pmod{4}$ if $v_i \in T$, and
\item $|f(v_i)-s(v_i)| \leq 2k_i+1.$
\end{itemize}

Fix a value for $f(v_i)$ from $S(v_i)$ that satisfies these three properties simultaneously. We repeat this procedure for all vertices one after another to achieve property (i) of the statement. It remains to define the edge-weighting $\omega$ that fulfills (ii). 

Let $V^+ := \{v_i \in V: f(v_i) > s(v_i)\}$ and $V^- := \{v_i \in V: f(v_i) < s(v_i)\}$. Moreover, for all $v_i \in V$ let 
$$g(v_i) := \begin{cases}
\frac{1}{2} (f(v_i)-s(v_i)-1), & \text{if } v_i \in V^+, \\
\frac{1}{2} (f(v_i)-s(v_i)+1), & \text{if } v_i \in V^-.
\end{cases}$$
Observe that for all $1 \le i \le n$, $|g(v_i)| \le k_i$ by construction. In order to apply Lemma~\ref{lemma:flow}, we construct a subset $F \subseteq E(S) \cup E(T)$ and an orientation $\sigma$ of $F$ as follows. 

For each vertex $v_i \in S$, choose $|g(v_i)|$ neighbors $v_j \in S$ with smaller index (i.e., $j<i$) and add the $|g(v_i)|$ edges $\{v_i, v_j\}$ to $F$. If $v_i \in V^+$, increase the weight of the $|g(v_i)|$ edges $\{v_i, v_j\}$ to $3$ and add the orientations $(v_i, v_j)$ to $\sigma$. Vice versa, if $v_i \in V^-$, decrease the weight of the edges $\{v_i,v_j\}$ to $1$ and add the orientation $(v_j, v_i)$ to $\sigma$. After having executed the described modifications on the edge weights for all vertices in $S$, the weighted degree of each $v_i \in S$ potentially received changes when considering itself and when considering vertices with higher index, resulting in a current value of
$$t(v_i) := 2\deg(v_i) + 2g(v_i) + |\{w:(w,v_i) \in \sigma\}| - |\{w:(v_i,w) \in \sigma\}|,$$
no matter whether $v_i \in V^+$ or $v_i \in V^-$.

For each vertex $v_i \in T$, choose $|g(v_i)|$ neighbors $v_j \in T$ with $j<i$ and add the $|g(v_i)|$ edges $\{v_i, v_j\}$ to $F$. If $v_i \in V^+$, change the weight of these edges to $3$ and always add the orientation $(v_j, v_i)$ to $\sigma$. If $v_i \in V^-$, always add the orientation $(v_i, v_j)$ to $\sigma$ and change the edge-weights to $1$. Mind the differences compared to $S$ regarding the orientations. Under the modified weighting, the current weighted degree of $v_i \in T$ is
$$t(v_i) := 2\deg(v_i) + 2g(v_i) + |\{w:(v_i,w) \in \sigma\}| - |\{w:(w,v_i) \in \sigma\}|.$$

Having defined $F$ and $\sigma$, we proceed by constructing the auxiliary multigraph $G_{C,F,\sigma}$ as specified in the statement of Lemma~\ref{lemma:flow}. Thereby, each edge of $F$ leads to exactly one arc incident to $s$ and one arc incident to $t$, where $s$ and $t$ are the two additional nodes inserted into the graph. For each node $v_i$, the construction is such that the number of arcs from $s$ to $v_i$ is $|\{w:(v_i,w) \in \sigma\}|$ and the number of arcs from $v_i$ to $t$ is $|\{w:(w,v_i) \in \sigma\}|$. 

We now apply Lemma~\ref{lemma:flow} to $G$ and obtain an $s$-$t$-flow of size $|F|$ in the auxiliary multigraph $G_{C,F,\sigma}$. As all edges have capacity $1$, there are $|F|$ edge-disjoint $s$-$t$-paths in $G_{C,F,\sigma}$. Consider such a directed path $p=(s, u_1, \ldots, u_m, t)$, and let $p' = \{u_1, \ldots, u_m\}$ be its induced, undirected subpath in the bipartite graph $G(S,T)$. Unless $u_1 = u_m$ (which happens when $p'$ is an empty path), we modify the weighting $\omega$ of each edge $\{u_i, u_{i+1}\} \in p'$ as follows: increase the weight to $3$ if $u_i \in S$, and decrease the weight to $1$ if $u_i \in T$. In other words, we alternately increase or decrease the edge weights along the path. The weighted degrees of the internal nodes $u_2, \ldots, u_{m-1}$ thereby do not change, in contrast to those of $u_1$ and $u_m$. The weighted degree of $u_1$ increases by $1$, if $u_1 \in S$, and decreases by $1$, if $u_1 \in T$. Regarding $u_m$, its weighted degree increases by $1$, if $u_m \in T$, and decreases by $1$, if $u_m \in S$. When $u_1=u_m$, there is no change on the weighted degree of this node. We repeat the described modification on $\omega$ for all $|F|$ paths provided by Lemma~\ref{lemma:flow}. Denote by $\omega$ the resulting edge-weighting.

As we found $|F|$ edge-disjoint $s$-$t$-paths in the auxiliary network $G_{C,F,\sigma}$, each arc starting at $s$ and each arc arriving at $t$ is included in exactly one of the $s$-$t$-paths on which we modified edge-weights. Looking at a vertex $v_i \in S$, each of the arcs $(s,v_i)$ is contained in an $s$-$t$-path $p$, so for each arc $(s,v_i)$ we increased the weighted degree of $v_i$ by $1$ when handling $p$. Similarly, for each arc $(v_i,t)$ we decreased the weighted degree of $v_i$ by $1$ when handling the $s$-$t$-path which contained that arc. The only exceptions are $s$-$t$-paths of the form $\{s, v_i, t\}$ or $\{s, v_i, \ldots, v_i, t\}$ where we didn't make any weight modifications. But there, the arcs $(s,v_i)$ and $(v_i,t)$ cancel each other out. Summing up the changes on the weighted degree of $v_i$, we deduce that it holds
$$s_{\omega}(v_i) = t(v_i) + |\{w:(v_i,w) \in \sigma\}| - |\{w:(w,v_i) \in \sigma\}| = 2\deg(v_i) + 2g(v_i).$$ 
Vice versa, consider a vertex $v_i \in T$. By the same argument, for each arc $(s,v_i)$ we decreased the weighted degree of $v_i$ by $1$ and for each arc $(v_i,t)$ we increased the weighted degree of $v_i$ by $1$. Adding up the changes, we again have
$$s_{\omega}(v_i) = t(v_i) + |\{w:(w,v_i) \in \sigma\}| - |\{w:(v_i,w) \in \sigma\}| = 2\deg(v_i) + 2g(v_i).$$ 

Putting everything together and plugging in the definition of $s(v_i)$ and then the definition of $g(v_i)$, we conclude that for each $v_i \in V$ it holds
$$
|s_{\omega}(v_i)+h(v_i)-f(v_i)| =|2\deg(v_i)+2g(v_i)+h(v_i)-f(v_i)| 
= |2g(v_i)+s(v_i)-f(v_i)| 
= 1.
$$
\end{proof}

With Lemma~\ref{lemma:weighting}, we get an edge-weighting for $E(B)$ and designated weights $f(v)$ for the vertices $v \in B$. For each blue node $v$, some amount $\alpha(v)$ of incident edge weights is missing to actually achieve its designated color with its weighted degree. This additional weight $\alpha(v)$ will be gained via the edges between $B$ and $R$. With Lemma~\ref{lemma:edgesbetween} below, we indeed find an edge-weighting for the subgraph $G(R,B)$ where the weighted degree of each $v \in B$ is $\alpha(v)$.

As discussed above, we also have to cover some uncomfortable cases where some vertices will be removed from the graph, whereupon the situation becomes more complex. In some of these situations, there will be a set $R' \subseteq R$ of vertices that should attain a weighted degree of odd parity  (instead of even parity as expected). Moreover, for a few blue vertices $v \in B$, $\alpha(v)$ may be even. In some other cases, the edge-weighting is forced to accomplish some requirements on a fixed path $p$, to guarantee that the weights can be further changed along the edges of $p$ without destroying the entire structure. Lemma~\ref{lemma:edgesbetween} contains several additional properties that will help us to handle the exceptional cases.

\begin{lemma}\label{lemma:edgesbetween}
Let $G=(V,E)$ be a connected bipartite graph with parts $B$ and $R$. Let $\alpha:B \rightarrow \NN$ be a function such that $\alpha(v) \in \{2\deg(v)-1, 2\deg(v), 2\deg(v)+1\}$ for all $v \in B$. Moreover, let $R' \subseteq R$ such that $|R'| + \sum_{v \in B} \alpha(v)$ is even. Then there exists an edge-weighting $\omega: E \rightarrow \{1,2,3\}$ such that
\begin{enumerate}[(i)]
\item $s_{\omega}(v)$ is even for all $v \in R \setminus R'$,
\item $s_{\omega}(v)$ is odd for all $v \in R'$, and
\item $s_{\omega}(v) = \alpha(v)$ for all $v \in B$.
\end{enumerate}
Moreover, let $p=\{v_1, \ldots, v_k\}$ be a fixed path with $k \ge 3$ and $v_1, v_k \in B$. Then $\omega$ satisfies in addition
\begin{enumerate}
\item[(iv)] $\omega(\{v_1,v_2\}) \neq 1$ if $\alpha(v_1)=2\deg(v_1)+1$ and $\omega(\{v_1,v_2\}) \neq 3$ otherwise,
\item[(v)] $\omega(\{v_{i-1},v_i\})+\omega(\{v_{i},v_{i+1}\}) \in \{3,4,5\}$ for each $1 < i < k$ with $v_i \in B$, and  
\item[(vi)] $\omega(\{v_{k-1},v_k\}) \neq 1$ if $\alpha(v_k)=2\deg(v_k)+1$  and $\omega(\{v_{k-1},v_k\}) \neq 3$ otherwise. 
\end{enumerate}
\end{lemma}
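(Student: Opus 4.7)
The plan is to reformulate the problem in terms of deviations from the all-$2$ edge-weighting. Writing $x_e := \omega(e) - 2 \in \{-1, 0, +1\}$ and $c(v) := \alpha(v) - 2\deg(v) \in \{-1, 0, +1\}$ for $v \in B$, conditions (i)--(iii) translate to $\sum_{e \ni v} x_e = c(v)$ for each $v \in B$, and $\sum_{e \ni v} x_e \equiv [v \in R'] \pmod{2}$ for each $v \in R$. Let $B_1 := \{v \in B : c(v) \neq 0\}$ (equivalently, $\{v \in B : \alpha(v) \text{ odd}\}$), and let $O := \{e : x_e \neq 0\}$ denote the support. The parity requirements on both sides of the bipartition say exactly that $O$ is a $T$-join of $G$ with $T := B_1 \cup R'$. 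Since $|T| = |B_1| + |R'| \equiv \sum_{v \in B}\alpha(v) + |R'| \equiv 0 \pmod{2}$ by hypothesis and $G$ is connected, such a $T$-join exists.

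Given $O$, conditions (i)--(iii) alone are easy to secure. Set $\omega(e) = 2$ for $e \notin O$. At each $v \in B$, independently pick $(d_O(v) + c(v))/2$ of the $d_O(v) := |\{e \in O : v \in e\}|$ incident $O$-edges and assign them weight $3$ (so $x_e = +1$), letting the remaining incident $O$-edges receive weight $1$ (so $x_e = -1$). Because $G$ is bipartite, every edge of $O$ has a unique endpoint in $B$, so the choices made at distinct $B$-vertices never conflict. The $B$-side constraints then hold by construction, and at each $v \in R$ the parity of $s_\omega(v)$ equals $d_O(v) \bmod 2 = [v \in R']$ by the $T$-join property.

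To secure (iv)--(vi) along the fixed path $p = \{v_1, \ldots, v_k\}$, I would first try to select the $T$-join $O$ to be entirely disjoint from $E(p)$. This assigns weight $2$ to every $p$-edge and hence trivially satisfies all three path conditions, since the forbidden weights in (iv), (vi) and the forbidden pairs $(1,1),(3,3)$ in (v) all involve weights different from $2$. Such a choice is possible provided the graph $G' := G - E(p)$ itself admits a $T$-join, i.e., each connected component of $G'$ has an even-sized intersection with $T$. If $G'$ is connected this reduces to the evenness of $|T|$, already verified. Otherwise some edges of $p$ are bridges of $G$, and balancing the parities across components of $G'$ forces some $p$-edges to receive weight $1$ or $3$.

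The main obstacle is this residual case: when the construction forces several $p$-edges to carry weight different from $2$, those weights are jointly constrained by (iv) and (vi) at the endpoints and by (v) at each internal $B$-vertex. I would address it via a case analysis indexed by the three possible values of $\alpha(v_1)$ and $\alpha(v_k)$ and by the bridge-pattern of $E(p)$ in $G$, showing that the necessary non-$2$ weights can always be chosen compatibly with (iv)--(vi) and that the residual $T$-join problem on the remaining edges of $G' $ remains feasible after these pre-commitments. I expect this combinatorial bookkeeping, rather than any single new idea, to be the technically delicate heart of the proof.
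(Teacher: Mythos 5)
Your treatment of (i)--(iii) is correct and is essentially the paper's argument in different clothing: your $T$-join with $T = B_1 \cup R'$ is exactly the paper's edge set $E_o$ (built greedily on a rooted spanning tree so that $|E_o(v)|$ is odd precisely for $v \in V_o := R' \cup \{v \in B : \alpha(v) \neq 2\deg(v)\}$), and your observation that bipartiteness lets each $B$-vertex distribute its $\pm 1$'s independently is the same mechanism the paper uses. The parity bookkeeping ($|T|$ even, $d_O(v) \equiv c(v) \pmod 2$, $|c(v)| \le d_O(v)$) all checks out.

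The gap is in (iv)--(vi), which you explicitly leave as a plan rather than a proof. Your first idea -- choose the $T$-join disjoint from $E(p)$ so every path edge keeps weight $2$ -- fails exactly when a path edge is a bridge of $G$ separating two components each meeting $T$ an odd number of times, as you note; and the ``case analysis indexed by the bridge-pattern of $E(p)$'' that you defer is precisely where all the remaining work lives, so nothing has been proved. Moreover, this is the wrong axis to case-split on: the paper does not try to avoid $p$ at all. It instead forces $p$ \emph{into} the spanning tree on which the join is built, and then exploits the per-vertex freedom in assigning $+1$'s and $-1$'s. The point is that at any $v \in B$ on $p$, at most two incident path edges lie in $E_o(v)$, and the prescribed split of $E_o(v)$ into $\tfrac12(|E_o(v)|\pm 1)$ (or $\tfrac12|E_o(v)|$ each) weights $1$ and $3$ always leaves at least one edge of each needed sign available: at an internal vertex one can give the two path edges one $1$ and one $3$ (sum $4$), and at an endpoint the sign demanded by (iv) or (vi) is available precisely because those conditions were formulated to match the majority sign forced by $\alpha(v_1)$, resp.\ $\alpha(v_k)$ (e.g.\ if $\alpha(v_1)=2\deg(v_1)+1$ there is at least one weight-$3$ slot, and (iv) asks only that $\{v_1,v_2\}$ avoid weight $1$). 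Until you either carry out your bridge case analysis in full or adopt an argument of this kind, the proof of (iv)--(vi) is missing.
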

\begin{proof}
We start by setting the initial weight of all edges to $2$. Let $T$ be a spanning tree of $G$ that includes the fixed path $p$. We construct $\omega$ by only changing the weights of a subset $E_o$ of $T$-edges. Consider $T$ as a rooted tree with arbitrary root $r$ and denote for each $v \neq r$ by $par(v)$ its parent in the rooted tree.

Let $V_o := R' \cup \{v \in B: \alpha(v) \neq 2\deg(v)\}$ be the set of all vertices that shall obtain an odd weighted degree. Note that the assumption on $R'$ guarantees that $|V_o|$ is even. While constructing the set $E_o$, denote for each $v \in V$ by $E_o(v)$ the subset of edges from $E_o$ that are incident to $v$. We want to arrange the set $E_o$ so that for all nodes $v \in V$, $|E_o(v)|$ is odd if and only if $v \in V_o$. 

We start with the leafs of $T$. For each leaf $\ell$, put $\{\ell, par(\ell)\}$ into $E_o$ if and only if $\ell \in V_o$. We then iterate to the internal nodes of $T$ and repeat the idea: we consider each node $v$ only after having handled all its children, and then decide whether we put $\{v, par(v)\}$ into $E_o$ or not, thereby always ensuring that $|E_o(v)|\equiv 1 \pmod{2}$ if and only if $v \in V_o$. For the root $r$, the argument does not work, since $r$ has no parent node. However, because each edge from $E_o$ contributes to two sets $E_o(v)$, the sum $\sum_{v \in V}|E_o(v)|$ must be even.  Thus,
$$0 \pmod{2} \equiv \sum_{v \in V}|E_o(v)| \pmod{2} \equiv \big(|E_o(r)| + |V_o \setminus \{r\}|\big) \pmod{2},$$ 
and since $|V_o|$ is even, we see that also for the root $r$, the value $|E_o(r)|$ is odd if and only if $r \in V_o$. 

We are now going to modify the weighting of $E_o$. As the graph $G$ is bipartite with parts $R$ and $B$, it is sufficient to only consider sets $E_o(v)$ where $v \in B$. For each $v \in B$, we change the weights of the edges in $E_o(v)$ according to the following rules.

If $\alpha(v) = 2\deg(v)-1$, decrease the weights of $\tfrac{1}{2}(|E_o(v)|+1)$ edges in $E_o(v)$ to $1$, and increase the weights of all other edges in $E_o(v)$ to $3$. Then indeed the weighted degree of $v$ becomes 
$$2(\deg(v)-|E_o(v)|)+\tfrac{1}{2}(|E_o(v)|+1)+\tfrac{3}{2}(|E_o(v)|-1)=2(\deg(v)-|E_o(v)|)+2|E_o(v)|-1=\alpha(v).$$
If $v$ is not a vertex of the fixed path $p$, we can distribute these weight modifications arbitrarily among $E_o(v)$, otherwise there are some restrictions. In situations where $v$ is an internal vertex of $p$ and both incident edges are contained in $E_o(v)$, we ensure that one of the two edges gets weight $1$ and the other $3$. If $v=v_1$ is the starting vertex of $p$ and $\{v_1,v_2\} \in E_o(v_1)$, set $\omega(\{v_1,v_2\})=1$ and distribute the other weights (if there are any) arbitrarily among $E_o(v)$. Similarly, if $v=v_k$ is the ending vertex of $p$ and $\{v_{k-1},v_k\} \in E_o(v_k)$, our only restriction is to put weight $1$ on the edge $\{v_{k-1},v_k\}$.

If $\alpha(v) =2\deg(v)+1$, decrease the weights of $\tfrac{1}{2}(|E_o(v)|-1)$ edges of $E_o(v)$ to $1$, and increase all other weights of edges of $E_o(v)$ to $3$. Then, again it holds $s_{\omega}(v)=\alpha(v)$. Similarly as above, ensure that if $v$ is an internal node of $p$, not both edges on $p$ incident to $v$ get the same odd weight. Moreover, if $v$ is the starting or ending vertex of $p$, ensure that the edge on $p$ incident to $v$ does not receive weight $1$ if it is contained in $E_o(v)$. 

Finally, if $\alpha(v)=2\deg(v)$, $|E_o(v)|$ is even and we assign the weight $1$ to one half of the edges in $E_o(v)$ and weight $3$ to the other half, to assure $s_{\omega}(v)=\alpha(v)$. Once more, if $v$ is an internal vertex of $p$, ensure that not both incident edges on $p$ get the same odd weight. Furthermore, if $v$ is the starting or ending vertex of $p$, again take care of that the edge on $p$ incident to $v$ does not receive weight $3$.

With the described weight modifications, the edge-weighting $\omega$ is defined and (iii)-(vi) are fulfilled. Regarding the red vertices, for each node $v \in R$, $|E_o(v)|$ is odd if and only if $v \in R'$, and exactly the incident edges that are contained in $E_o(v)$ have been weighted with an odd value. Thus, $\omega$ achieves properties (i) and (ii) as well.
\end{proof}

\section{Proof of Theorem~\ref{thm:main}}\label{section:main}

In Section~\ref{section:mainideas}, we prepared the proof with several auxiliary results. The plan is now to define an independent set $R$ with Lemma~\ref{lemma:divide}, to define $B:=V \setminus R$, to use Lemma~\ref{lemma:weighting} for finding an edge-weighting for $G[B]$, and finally to apply Lemma~\ref{lemma:edgesbetween} when extending the edge-weighting to the remaining edges. The crucial point behind this strategy is that the set $B$ is required to have even cardinality, making the proof significantly more technical. 

We therefore describe three basic situations regarding the sets $R$ and $B$, and, in some cases, one additional vertex $v_0 \notin R \cup B$. We demonstrate for each situation that a vertex-coloring edge-weighting with weights $\{1,2,3\}$ can be constructed, always using Lemma~\ref{lemma:weighting} and Lemma~\ref{lemma:edgesbetween} in combination. Afterwards in the actual proof of Theorem~\ref{thm:main}, we will show by a case distinction that for each graph, the problem can actually be reduced to one of the three basic situations.

We use the following definition to annotate a partition $V=R \cup B$ that achieves all required properties.

\begin{definition}\label{definition:partition}
Let $G=(V,E)$ be a connected graph and let $V=R \cup B$ be a partition of the vertex set into two disjoint subsets of red and blue nodes. We say that $(R,B)$ is a good $R$-$B$-partition of $G$ if $R$ is an independent set, the bipartite subgraph $G(R,B)$ is connected, and $|B| \equiv 0 \pmod{2}$.
\end{definition}

Obviously, the ideal situation occurs when there is a good $R$-$B$-partition known for the entire graph. Lemma~\ref{lemma:basicsituation} shows how we then find a suitable edge-weighting.

\begin{lemma}\label{lemma:basicsituation}
Let $G=(V,E)$ be a connected graph and let $R \cup B$ be a good $R$-$B$-partition of $G$. Then there exists an edge weighting $\omega:E \rightarrow \{1,2,3\}$ such that the weighted degrees $s_{\omega}$ yield a proper vertex coloring of $G$ and such that $s_{\omega}(v)$ is even if and only if $v \in R$.
\end{lemma}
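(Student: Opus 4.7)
The plan is to split the weighting into two independent pieces: weights on $E(B)$, constructed via Lemma~\ref{lemma:weighting}, and weights on $E(R,B)$, constructed via Lemma~\ref{lemma:edgesbetween}. Since $R$ is independent, $E = E(B) \cup E(R,B)$, and the two weightings combine to a single $\omega: E \to \{1,2,3\}$ in which the weighted degree of every $v \in B$ is the sum of its contributions from both pieces, while the weighted degree of every $v \in R$ comes entirely from $E(R,B)$.

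First I would apply Lemma~\ref{lemma:weighting} to the subgraph $G[B]$ with the auxiliary function $h(v) := 2\deg_R(v)$ to obtain an edge-weighting $\omega_B: E(B) \to \{1,2,3\}$ together with a proper vertex coloring $f: B \to \{1,3,5,\ldots\}$ by odd values such that $|s_{\omega_B}(v) + 2\deg_R(v) - f(v)| = 1$ for every $v \in B$. Define $\alpha(v) := f(v) - s_{\omega_B}(v)$; then $\alpha(v) \in \{2\deg_R(v)-1, 2\deg_R(v)+1\}$, which matches the hypothesis on $\alpha$ in Lemma~\ref{lemma:edgesbetween} applied to the bipartite graph $G(R,B)$ (which is connected by assumption), noting that $\deg_R(v)$ is exactly the degree of $v$ in $G(R,B)$.

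Next I would verify the parity condition needed to apply Lemma~\ref{lemma:edgesbetween} with $R' = \emptyset$. Since $\sum_{v \in B} s_{\omega_B}(v) = 2\sum_{e \in E(B)}\omega_B(e)$ is even and each $f(v)$ is odd, $\sum_{v \in B} \alpha(v) \equiv |B| \pmod 2$, which is $0$ because $(R,B)$ is a good partition. Lemma~\ref{lemma:edgesbetween} then provides an edge-weighting $\omega_{RB}: E(R,B) \to \{1,2,3\}$ such that $s_{\omega_{RB}}(v)$ is even for every $v \in R$ and $s_{\omega_{RB}}(v) = \alpha(v)$ for every $v \in B$.

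Finally I would combine the two weightings into $\omega$ and verify the conclusion. For $v \in R$, $s_\omega(v) = s_{\omega_{RB}}(v)$ is even; for $v \in B$, $s_\omega(v) = s_{\omega_B}(v) + \alpha(v) = f(v)$, hence odd, giving the parity claim. An edge $\{v,w\}$ with $v \in R$ and $w \in B$ cannot be a conflict because $s_\omega(v)$ is even and $s_\omega(w) = f(w)$ is odd; an edge $\{v,w\} \subseteq B$ cannot be a conflict because $s_\omega(v) = f(v) \neq f(w) = s_\omega(w)$ by property (i) of Lemma~\ref{lemma:weighting}; and there are no edges inside $R$. The proof is then essentially a clean composition of the two auxiliary lemmas, and I expect no real obstacle: the only substantive check is that $|B|$ even (the ``goodness'' of the partition) together with the oddness of the designated colors $f(v)$ yields the parity condition that allows Lemma~\ref{lemma:edgesbetween} to be invoked with $R' = \emptyset$.
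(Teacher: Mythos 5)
Your proof is correct and follows essentially the same route as the paper's: apply Lemma~\ref{lemma:weighting} to $G[B]$ with $h(v)=2\deg_R(v)$, set $\alpha(v)=f(v)-s_{\omega_B}(v)$, check the parity condition using $|B|$ even, and finish with Lemma~\ref{lemma:edgesbetween} on $G(R,B)$ with $R'=\emptyset$. The only cosmetic difference is that you derive the parity of $\sum_{v\in B}\alpha(v)$ via the evenness of $\sum_v s_{\omega_B}(v)$, whereas the paper notes directly that each $\alpha(v)$ is odd; both are valid.
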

\begin{proof}
For all $v \in B$, let $h(v) := 2\deg_R(v)$. We apply Lemma~\ref{lemma:weighting} to $G[B]$ and to $h$, and obtain a weighting $\omega_1$ of $E(B)$ together with a function $f$ on $B$, standing for the designated final weighted degrees of the nodes.

Next, for each $v \in B$ let $\alpha(v) := f(v) - s_{\omega_1}(v)$ be the difference between the designated weighted degree and the already received incident edge weights. By Lemma~\ref{lemma:weighting}~(ii), we know that $\alpha(v) \in \{2\deg_R(v)-1, 2\deg_R(v)+1\}$. Moreover, by putting $R' := \emptyset$ and using the assumption that $|B|$ is even, it follows that the value of $|R'| + \sum_{v \in B} \alpha(v)$ is even.

We apply Lemma~\ref{lemma:edgesbetween} to the bipartite subgraph $G(R,B)$ and to $\alpha$, without considering any path $p$, to obtain a weighting $\omega_2$ for $G(R,B)$ where each vertex $v \in R$ receives an even-valued weighted degree $s_{\omega}(v) := s_{\omega_2}(v)$. Each vertex $v \in B$ gets an additional weight of $\alpha(v)$, hence combining the two weightings $\omega_1$ and $\omega_2$, for $v \in B$ we have $s_{\omega}(v):=s_{\omega_1}(v)+s_{\omega_2}(v)=f(v)$. By Lemma~\ref{lemma:weighting}, $f$ only attains odd values and for any two neighbors $v,w \in B$ we have $f(v) \neq f(w)$. Because $R$ is an independent set, the weighted degrees $s_{\omega}$ indeed properly color the vertices of the graph $G$.
\end{proof}

In the next situation, there exists an additional vertex $v_0$ which is not included in $R$ or $B$, i.e., we only have a good $R$-$B$-partition of $G[V \setminus \{v_0\}]$. Consequently, the weighted degree of $v_0$ must be even and coloring conflicts between $v_0$ and its neighbors in $R$ can arise. To solve these conflicts, we put odd weights on the edges between $v_0$ and $R$. Carefully choosing weights $1$ or $3$, we can ensure that the weighted degree of $v_0$ is different from all of its neighbors. However, the argument only works when $v_0$ has at least $2$ neighbors in $R$. Furthermore, if $\deg_R(v_0)$ is odd, $v_0$ is required to have at least one neighbor in $B$, as the respective edge is needed for making $s_{\omega}(v_0)$ even-valued.

\begin{lemma}\label{lemma:remainingvertex}
Let $G=(V,E)$ be a graph and let $v_0 \in V$ be a vertex such that $G[V \setminus \{v_0\}]$ is connected. Moreover, let $(R,B)$ be a good $R$-$B$-partition of $G[V \setminus \{v_0\}]$ such that $\deg_R(v_0) \ge 2$ and such that either $\deg_R(v_0)$ is even or $\deg_B(v_0) \ge 1$. Then there exists a vertex-coloring edge weighting $\omega:E \rightarrow \{1,2,3\}$ of $G$.
\end{lemma}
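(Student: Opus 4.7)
The plan is to reuse the recipe of Lemma~\ref{lemma:basicsituation} on the subgraph $G[V\setminus\{v_0\}]$ and then to assign weights to the edges incident to $v_0$ carefully. The goal throughout is $s_\omega(v)=f(v)$ (odd) for every $v\in B$, $s_\omega(r)$ even for every $r\in R$, and $s_\omega(v_0)$ even but distinct from every $s_\omega(r)$ with $r\in R\cap N(v_0)$. Once this is achieved, only conflicts between $v_0$ and its red neighbors need to be ruled out, since parity already separates $v_0$ from all $b\in B\cap N(v_0)$.

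First I would apply Lemma~\ref{lemma:weighting} to $G[B]$ with $h(v):=2\deg_R(v)+2\cdot\1[v\in N(v_0)]$ (which is even), obtaining $\omega_1$ and odd designated colors $f(v)$ with $|s_{\omega_1}(v)+h(v)-f(v)|=1$. I would then define $\alpha(v):=f(v)-s_{\omega_1}(v)-2\cdot\1[v\in N(v_0)]\in\{2\deg_R(v)-1,2\deg_R(v)+1\}$, put $R':=R\cap N(v_0)$, and apply Lemma~\ref{lemma:edgesbetween} on $G(R,B)$. When $\deg_R(v_0)$ is even, $|R'|$ is even and $\sum_{v\in B}\alpha(v)$ is a sum of $|B|$ odd numbers, hence even, so the parity hypothesis is automatic. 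The construction is finished by putting weight $2$ on every edge $\{v_0,b\}$ with $b\in B\cap N(v_0)$ and odd weights $x_r\in\{1,3\}$ on every $\{v_0,r\}$ with $r\in R\cap N(v_0)$; a direct check confirms $s_\omega(v)=f(v)$ on $B$, even $s_\omega(r)$ on $R$, and even $s_\omega(v_0)$.

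When $\deg_R(v_0)$ is odd the previous parity check fails by one, and this is where the hypothesis $\deg_B(v_0)\geq 1$ enters. I would single out a neighbor $b_0\in B\cap N(v_0)$, observe which of $f(b_0)=s_{\omega_1}(b_0)+h(b_0)\pm 1$ is realised, and accordingly set $\omega(\{v_0,b_0\})\in\{1,3\}$ so that the redefined $\alpha(b_0):=f(b_0)-s_{\omega_1}(b_0)-\omega(\{v_0,b_0\})$ equals $2\deg_R(b_0)$ (which still lies in the admissible range of Lemma~\ref{lemma:edgesbetween}). Since $\alpha(b_0)$ is now even while all other $\alpha(v)$ stay odd, $\sum\alpha(v)$ flips parity and compensates $|R'|=\deg_R(v_0)$ being odd. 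The remaining $\{v_0,b\}$ edges are again weighted $2$ and the $\{v_0,r\}$ edges odd; parity-wise everything balances, as the odd weight on $\{v_0,b_0\}$ is exactly what restores $s_\omega(v_0)$ to an even value.

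The main technical obstacle is the combinatorial step of choosing the $x_r\in\{1,3\}$ so as to avoid the conflicts of $v_0$ with its red neighbors. For $d:=\deg_R(v_0)\geq 2$ and suitable integers $t_1,\ldots,t_d$ encoding the potential conflicts, the question is whether there exist $x_1,\ldots,x_d\in\{1,3\}$ with $\sum_{j\neq i}x_j\neq t_i$ for every $i$. I would parametrise via $S:=\sum_j x_j\in\{d,d+2,\ldots,3d\}$, noting that $S$ forces $x_i=3$ for $i\in F_1(S):=\{i:t_i=S-1\}$ and $x_i=1$ for $i\in F_3(S):=\{i:t_i=S-3\}$, and that $S$ admits a consistent completion iff $|F_1(S)|\leq(S-d)/2$ and $|F_3(S)|\leq(3d-S)/2$. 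If no $S$ worked, summing the resulting lower bounds on $n_k:=|\{i:t_i=d-1+2k\}|$ across $k\in\{0,\ldots,d\}$ and accounting for the double-counting at the unique transition between the two regimes produces a total strictly exceeding $\sum_k n_k\leq d$; the decisive inequality reduces to $(d-1)^2>0$, which gives a contradiction for $d\geq 2$. Combining $\omega_1$, $\omega_2$, the fixed weights incident to $v_0$, and the chosen $x_r$ then yields the desired vertex-coloring edge-weighting.
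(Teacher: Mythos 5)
Your construction coincides with the paper's own proof up to the very last step: the paper also takes $h(v)=2|N(v)\setminus B|$ (which equals your $2\deg_R(v)+2\cdot\1[v\in N(v_0)]$), the same shifted $\alpha$ on $N(v_0)\cap B$, the same $R'=N(v_0)\cap R$, and the same parity repair when $\deg_R(v_0)$ is odd via a distinguished blue neighbour $u_0$ with $\alpha(u_0)=2\deg_R(u_0)$ and an odd weight on $\{v_0,u_0\}$. The only genuine divergence is how the odd weights on the $v_0$--$R$ edges are chosen. The paper starts with all of them equal to $1$, takes the smallest $x>0$ for which $s_{\omega}(v_0)+2x$ avoids all red neighbours, and explicitly raises $x$ or $x+1$ carefully chosen edges to $3$, with a short case analysis on where the sorted values sit. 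You instead argue pure existence by scanning the admissible sums $S\in\{d,d+2,\dots,3d\}$; your feasibility criterion for a fixed $S$ (namely $x_i=3$ forced when $t_i=S-1$, $x_i=1$ forced when $t_i=S-3$, and $S$ feasible iff $|F_1(S)|\le (S-d)/2$ and $|F_3(S)|\le (3d-S)/2$) is correct, and the claim that some $S$ is feasible for $d\ge 2$ is true. One caveat on your final summation: it is looser than the sketch suggests. Taking $m^{*}$ to be the first index at which the $F_1$-witness fails, the failure of $S=d+2m^{*}$ forces $n_{m^{*}-1}\ge d-m^{*}+1$, and for $m^{*}=1$ the resulting bound is only $\sum_k n_k\ge n_0\ge d$, which does not yet contradict $\sum_k n_k\le d$; you must additionally invoke the forced bounds $n_0\ge1$ from $S=d$ and $n_{d-1}\ge1$ from $S=3d$ (using $d\ge2$ so these are distinct slots in the critical cases) to push the total to $d+1$. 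With that bookkeeping every transition point yields a contradiction, so your argument is completable; the paper's explicit assignment avoids this accounting at the cost of a less symmetric case distinction.
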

\begin{proof}
Let $h(v) := 2|N(v) \setminus B|$ for all $v \in B$. We will construct the weighting $\omega$ in three steps: a weighting $\omega_1$ for $E(B)$, a weighting $\omega_2$ for the edges between $R$ and $B$, and a weighting $\omega_3$ for the edges incident to $v_0$. The final weighting $\omega$ is then the combination of $\omega_1$, $\omega_2$, and $\omega_3$.

We first apply Lemma~\ref{lemma:weighting} to $G[B]$ and $h$ to obtain a weighting $\omega_1$ of the edges set $E(B)$, together with designated final weighted degrees $f(v)$ for the blue nodes. By Lemma~\ref{lemma:weighting}~(ii) and our choice of $h$, for all $v \in B \setminus N(v_0)$ it holds
$$f(v)-s_{\omega_1}(v) = h(v) \pm 1 \in \{2\deg_R(v)+1, 2\deg_R(v)-1\},$$
whereas for all $v \in N(v_0) \cap B$ we have
$$f(v)-s_{\omega_1}(v) = h(v) \pm 1 \in \{2\deg_R(v)+3, 2\deg_R(v)+1\}.$$

Since the edges between $v_0$ and $R$ will receive an odd weight, we put $R' := N(v_0) \cap R$, taking care of that the weighted degrees of these nodes will be even-valued at the end. Next, for all $v \in B \setminus N(v_0)$ we let $\alpha(v) := f(v) - s_{\omega_1}(v)$. Regarding $N(v_0) \cap B$, we distinguish two cases. 
\begin{itemize}
\item If $|R'|$ is even, we put $\alpha(v) := f(v)-s_{\omega_1}(v)-2$ for all $v \in N(v_0) \cap B$. Note that by construction, it holds $\alpha(v) \in \{2\deg_R(v)+1,2\deg_R(v)-1\}$.
\item If $|R'|$ is odd, then by assumption $\deg_B(v_0) > 0$. Fix a vertex $u_0 \in N(v_0) \cap B$ and set $\alpha(u_0) := 2\deg_R(u_0)$. For all $v \in B \cap N(v_0) \setminus \{u_0\}$, let again $\alpha(v) := f(v)-s_{\omega_1}(v)-2$.
\end{itemize}

Because $|B|$ is even, we ensured in both cases that $|R'| + \sum_{v \in B} \alpha(v)$ is even. We apply Lemma~\ref{lemma:edgesbetween} to the graph $G(R,B)$ and to $\alpha$ (but without any path $p$) and obtain a weighting $\omega_2$ for the bipartite graph such that for a vertex $v \in R$, $s_{\omega_2}(v)$ is odd if and only if $v \in R'$. Moreover, for each vertex $v \in B$, it holds $s_{\omega_2}(v)=\alpha(v)$. 

We now introduce the third weighting $\omega_3$ for the edges that are incident to $v_0$. For all $v \in N(v_0) \cap R$ put $\omega_3(\{v,v_0\}) := 1$. If $\deg_R(v_0)$ is odd, we have specified a distinct vertex $u_0 \in N(v_0) \cap B$. Set $\omega_3(\{v_0,u_0\}) := f(u_0)-s_{\omega_1}(u_0)-s_{\omega_2}(u_0)$ and observe that this value is indeed either $1$ or $3$. For all remaining edges $e$ between $v_0$ and $B$, set $\omega_3(e) := 2$. In both of our cases, the value $s_{\omega_3}(v_0)$ thereby becomes even. 

We combine $\omega := \omega_1 + \omega_2 + \omega_3$ to a full edge-weighting of $G$. For $v \in B$ we then have $s_{\omega}(v) = f(v)$, no matter whether $v$ is connected to $v_0$ or not. By Lemma~\ref{lemma:weighting}, $f$ attains only odd values on $B$ and for any two neighbors $v,w \in B$ we have $f(v) \neq f(w)$. For $v \in R$, $s_{\omega_2}(v)$ is odd if and only if $v \in R'$, by Lemma~\ref{lemma:edgesbetween}. However, for all $v \in R'$ we set $s_{\omega_3}(v)=1$, so $s_{\omega}(v)=s_{\omega_2}(v)+s_{\omega_3}(v)$ is even again. Because $s_{\omega}(v_0)=s_{\omega_3}(v_0)$ is even and $R$ is an independent set, it remains to guarantee that there are no coloring conflicts between $v_0$ and its neighbors in the set $R$.

Let $N_R(v_0) := N(v_0) \cap R = \{v_1, \ldots, v_k\}$, where $k \ge 2$ by assumption, and assume w.l.o.g.\ that $s_{\omega}(v_1) \le s_{\omega}(v_2) \le \ldots \le s_{\omega}(v_k)$. If $s_{\omega}(v_i) \neq s_{\omega}(v_0)$ for all $1 \le i \le k$, there are no coloring conflicts and we are done. Otherwise, we increase some edge-weights. Let $x > 0$ be the smallest integer such that $s_{\omega}(v_0)+2x$ is different from all values $s_{\omega}(v_1), \ldots, s_{\omega}(v_k)$, and let $i' \le k$ be maximal such that $s_{\omega}(v_{i'}) < s_{\omega}(v_0) + 2x$. Because at least one $s_{\omega}(v_i)$ is equal to $s_{\omega}(v_0)$, the index $i'$ is well-defined. 

First consider the case $i' \le k-x$. For each $v_i \in N_R(v_0)$ with $i > k-x$, increase the weight of $\{v_i, v_0\}$ from $1$ to $3$ and denote by $\omega'$ the resulting edge-weighting. 
 Then, for $i > k-x \ge i'$, it holds
$$s_{\omega'}(v_i) > s_{\omega}(v_i) > s_{\omega}(v_0) + 2x,$$ 
whereas for $i \le k-x$, we have 
$$s_{\omega'}(v_i)=s_{\omega}(v_i) \neq s_{\omega}(v_0) + 2x.$$
Hence, $s_{\omega'}(v_0)=s_{\omega}(v_0)+2x$ is different from $s_{\omega'}(v_i)$ for all $1 \le i \le k$.

Next, consider the case $i' > k-x$ and $x<k$. We change the weight from $1$ to $3$ for all edges $\{v_0,v_i\}$ where $v_i \in N_R(v_0)$ and $i \ge k-x$. Again, denote by $\omega'$ the resulting edge weighting. For $i \ge k-x$ we then have 
$$s_{\omega'}(v_i)=s_{\omega}(v_i)+2 \neq s_{\omega}(v_0)+2x+2,$$ 
whereas for all $i <k-x$ it holds 
$$s_{\omega'}(v_i)=s_{\omega}(v_i) \le s_{\omega}(v_{i'}) < s_{\omega}(v_0)+2x.$$
Thus, for all $1 \le i \le k$ we achieved $s_{\omega'}(v_0) = s_{\omega}(v_0) + 2x+2 \neq s_{\omega'}(v_i)$.

It remains to consider the case $x=k$. Here, for each $0\le y < k$, the value $s_{\omega}(v_0)+2y$ is attained by one $s_{\omega}(v_i)$. So we have $s_{\omega}(v_i)=s_{\omega}(v_0)+2i-2$ for all $1 \le i \le k$. We only increase the weight of $\{v_0,v_2\}$ to $3$. For the new edge-weighting $\omega'$, it holds $s_{\omega'}(v_0)=s_{\omega}(v_0)+2$, but $s_{\omega'}(v_1)=s_{\omega}(v_0)$, $s_{\omega'}(v_2)=s_{\omega}(v_2)+2=s_{\omega}(v_0)+4$, and, for all $i \ge 3$, 
$$s_{\omega'}(v_i)=s_{\omega}(v_i)=s_{\omega}(v_0)+2i-2 \ge s_{\omega}(v_0)+4.$$
\end{proof}

The last of our three base situations is again given by a vertex $v_0$ which is not contained in $B \cup R$. In contrast to the setting of Lemma~\ref{lemma:remainingvertex}, this time we have $\deg_R(v_0)=1$. Here, a coloring-conflict can only appear between $v_0$ and its single neighbor $u_0 \in R$. If this conflict occurs, we repair it by changing the weights along a cycle that includes $v_0$, so that the weighted degree of $v_0$ changes but that of $u_0$ remains the same.

\begin{lemma}\label{lemma:rearranging}
Let $G=(V,E)$ be a graph and let $v_0 \in V$ be a vertex such that the induced subgraph $G[V \setminus \{v_0\}]$ is connected. Let $(R,B)$ be a good $R$-$B$-partition of $G[V \setminus \{v_0\}]$ and let $u_0 \in R$ such that $N(v_0) \cap R = \{u_0\}$. Suppose that there exists a non-trivial path in $G(R,B)$ that starts and ends in $N(v_0) \cap B$ and does not include $u_0$. Then there exists an edge weighting $\omega:E \rightarrow \{1,2,3\}$ such that the weighted degrees $s_{\omega}$ yield a proper vertex coloring of $G$.
\end{lemma}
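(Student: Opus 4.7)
I would follow the overall pattern of Lemma~\ref{lemma:remainingvertex} as closely as possible, using the given path $p=\{v_1,\dots,v_k\}$ (with $v_1,v_k\in N(v_0)\cap B$, $u_0\notin V(p)$, and $k$ odd since $p$ sits in the bipartite graph $G(R,B)$ with both endpoints in $B$) as a rerouting device for the one residual conflict that the proof of Lemma~\ref{lemma:remainingvertex} cannot handle when $\deg_R(v_0)=1$.

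First I would apply Lemma~\ref{lemma:weighting} to $G[B]$ with $h(v):=2|N(v)\setminus B|$, obtaining $\omega_1$ and odd designated colors $f$. Then, setting $R':=\{u_0\}$ and fixing a distinguished vertex $u_0^B\in N(v_0)\cap B$ (which exists since $\deg_B(v_0)\ge 2$), I would put $\alpha(u_0^B):=2\deg_R(u_0^B)$ and $\alpha(v):=f(v)-s_{\omega_1}(v)-2\cdot\1[v\in N(v_0)]$ for every other $v\in B$. The choice of $u_0^B$ balances parity so that $|R'|+\sum_{v\in B}\alpha(v)$ is even, and I would apply Lemma~\ref{lemma:edgesbetween} to $G(R,B)$ with these data plus the path $p$, obtaining $\omega_2$ satisfying (i)--(vi). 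Finally I would set $\omega_3(\{v_0,u_0\}):=1$, $\omega_3(\{v_0,u_0^B\}):=f(u_0^B)-s_{\omega_1}(u_0^B)-s_{\omega_2}(u_0^B)\in\{1,3\}$, and $\omega_3(\{v_0,w\}):=2$ for the remaining blue neighbors $w$ of $v_0$. The combined $\omega:=\omega_1+\omega_2+\omega_3$ gives each blue $v$ its odd color $f(v)$, each red vertex an even weighted degree, and an even $s_\omega(v_0)$.

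Because $R$ is independent and $v_0$'s blue neighbors receive odd weighted degrees $f(w)$ while $s_\omega(v_0)$ is even, the only possible coloring conflict is $s_\omega(v_0)=s_\omega(u_0)$; if it is absent we are done. To resolve it, I would choose $u_0^B$ to be an endpoint of $p$, say $u_0^B=v_1$, so that $\omega(\{v_0,v_1\})\in\{1,3\}$ is susceptible to a $\pm 2$ shift. The repair consists of toggling $\omega(\{v_0,v_1\})$ between $1$ and $3$ and propagating an alternating sequence of $\mp 2, \pm 2, \dots$ shifts along the edges of $p$, using properties (iv)--(vi) of Lemma~\ref{lemma:edgesbetween} to guarantee that all modified weights remain in $\{1,2,3\}$. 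Such a propagation preserves the weighted degrees of $v_1$ and of every internal path vertex, shifts $s_\omega(v_k)$ by $\pm 2$ (keeping it odd, so that no collision with any red neighbor of $v_k$ or with $v_0$ can arise), and shifts $s_\omega(v_0)$ by $\pm 2$, resolving the original conflict.

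The principal obstacle is twofold. First, the natural alternating modification along the entire cycle $v_0v_1\cdots v_kv_0$ yields a net zero shift at $v_0$ (the two cycle-edges incident to $v_0$ are forced to have opposite signs when $k+1$ is even), so a genuinely asymmetric modification is required; this is why the repair must propagate along $p$ and change only one of the two edges $\{v_0,v_1\}$, $\{v_0,v_k\}$. Second, the shifted value $s_\omega(v_k)=f(v_k)\pm 2$ could in principle coincide with $f(w)$ for some blue neighbor $w$ of $v_k$ other than $v_{k-1}$. By considering both sign choices of the shift, and, if needed, interchanging the roles of the endpoints by taking $u_0^B=v_k$ and propagating from the opposite end of $p$, I expect to show that one of the resulting modifications always produces a proper vertex-coloring. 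The careful combinatorial case analysis underpinning this last step—tracking when properties (iv) and (vi) force a particular direction of shift, and verifying that in all configurations some admissible direction survives—will be the most intricate portion of the argument.
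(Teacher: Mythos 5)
Your setup (Lemma~\ref{lemma:weighting} on $G[B]$, then Lemma~\ref{lemma:edgesbetween} on $G(R,B)$ with $R'=\{u_0\}$ and a distinguished blue neighbor of $v_0$ absorbing the parity, then odd weights on $\{v_0,u_0\}$ and $\{v_0,u_0^B\}$) is sound and close in spirit to the paper's, which instead splits into cases according to whether $\beta(v_1)=\beta(v_k)$ and chooses $R'$ and the weights of the $v_0$-edges accordingly. The gap is in the repair step. First, your propagation of $\pm 2$ shifts along the edges of $p$ is not guaranteed to be feasible: a weight can only move by $2$ if it is currently $1$ or $3$, but properties (iv)--(vi) of Lemma~\ref{lemma:edgesbetween} do not exclude path edges of weight $2$ --- even the first edge $\{v_1,v_2\}$ is, under your choice $\alpha(v_1)=2\deg(v_1)$, only guaranteed to have weight $\neq 3$, and property (v) only constrains the \emph{sum} of consecutive weights to lie in $\{3,4,5\}$, precisely so that one edge can go up by $1$ while the other goes down by $1$. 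Second, your modification shifts $s_\omega(v_k)$ by $\pm 2$, and you leave open whether some choice of sign or endpoint avoids a new conflict at $v_k$; since $v_k$ may have many blue neighbors, all four candidate values $f(v_1)\pm 2$, $f(v_k)\pm 2$ could be blocked, so this is not a routine case analysis to be deferred but a genuine obstruction to the approach as stated.

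Your diagnosis that the alternating cycle modification nets zero at $v_0$ is also where you part ways with the actual mechanism. Strict sign alternation around the even cycle would indeed cancel at $v_0$, but alternation is only forced at \emph{blue} vertices, whose weighted degrees must be preserved exactly; at internal \emph{red} vertices of $p$ the weighted degree only needs to stay even, so the two incident path edges may both change by $1$ in arbitrary directions, and at $v_0$ itself a net change is exactly what is wanted. The paper therefore moves both $\{v_0,v_1\}$ and $\{v_0,v_k\}$ by $1$ in the \emph{same} direction (net $\pm 2$ at $v_0$), compensates at $v_1$ and $v_k$ by moving $\{v_1,v_2\}$ and $\{v_{k-1},v_k\}$ oppositely by $1$ --- feasible precisely because of (iv) and (vi), which is why the cases on $\beta(v_1),\beta(v_k)$ and the choice of $R'$ are arranged beforehand --- and then continues with $\pm 1$ moves along $p$ that preserve every internal blue degree by (v), while $u_0\notin p$ keeps $s_\omega(u_0)$ fixed. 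No vertex other than $v_0$ changes its weighted degree's value or parity class, so no new conflict can arise; this is the ingredient your repair is missing.
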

\begin{proof}
For all $v \in B$, put $h(v) := 2|N(v) \setminus B|$. We apply Lemma~\ref{lemma:weighting} to $G[B]$ and to $h$, and receive a weighting $\omega_1$ of $E(B)$, together with a function $f$ on $B$ where $f(v)-s_{\omega_1}(v)=h(v)\pm 1$ holds for all $v \in B$. At the end, for each $v \in B$ its weighted degree $s_{\omega}(v)$ shall coincide with $f(v)$.

Let $k \ge 3$ and let $p=\{v_1,v_2, \ldots, v_k\}$ be a path in $G(R,B)$ which does not have $u_0$ as internal node and whose starting and endpoint vertices $v_1, v_k$ are in $N(v_0) \cap B$. Next, for all $v \in N(v_0) \cap B$ define $\beta(v) := f(v)-s_{\omega_1}(v)-2\deg_R(v)$. Since $v_1$ and $v_k$ are connected to $v_0$, we have $\beta(v_1),\beta(v_k) \in \{1,3\}$. In the next step, we define a subset $R' \subseteq R$ and a weighting $\omega_2$ for the edges incident to $v_0$, thereby considering two cases.
\begin{enumerate}[(a)]
\item If $\beta(v_1)=\beta(v_k)$, let $R':=\emptyset$ and set $\omega_2(e)=2$ for each edge $e$ incident to $v_0$. 
\item If $\beta(v_1)\neq \beta(v_k)$, assume w.l.o.g.\ that $\beta(v_1)=3$ and $\beta(v_k)=1$. Set $\omega_2(\{v_0,u_0\}):=3$ and $\omega_2(\{v_0,v_1\}):=3$. For all other edges $e$ incident to $v_0$ (including in particular $\{v_0,v_k\}$), put $\omega_2(e):=2$. Finally, let $R' := \{u_0\}$.
\end{enumerate}

Moreover, for all $v \in B \cap N(v_0)$ we define $\alpha(v) := f(v)-s_{\omega_1}(v)-s_{\omega_2}(v)$, whereas for all $v \in B \setminus N(v_0)$ we set $\alpha(v) := f(v)-s_{\omega_1}(v)$. Note that for all blue vertices we have $\alpha(v) \in \{2\deg_R(v)+1,2\deg_R(v)-1\}$, except for $v_1$ in case (b) where $\alpha(v_1)=2\deg_R(v_1)$. We claim that $|R'| + \sum_{v \in B} \alpha(v)$ is even in both cases (a) and (b). Indeed, in case~(a), this follows because $\alpha(v)$ is odd for all $v \in B$, $|B|$ is even, and $R'$ is empty. In case~(b), it is true because $|B|$ is still even, $|R'|$ is odd, but $s_{\omega_2}(v_1)$ is odd and thus $\alpha(v_1)$ is even. We apply Lemma~\ref{lemma:edgesbetween} to $G(R,B)$, $\alpha$, and $p$, and obtain a weighting $\omega_3$ for the bipartite graph with various properties. Having $\omega_3$ on hand, let $\omega$ be the edge-weighting that combines $\omega_1$, $\omega_2$, and $\omega_3$. 

For each vertex $v \in R$, $s_{\omega_3}(v)$ is odd if and only if $v \in R'$, implying that for all $v \in R \setminus N(v_0)$, $s_{\omega}(v)=s_{\omega_3}(v)$ is even. Moreover, for the only vertex $u_0$ in $R \cap N(v_0)$, $s_{\omega}(u_0) = s_{\omega_2}(u_0)+s_{\omega_3}(u_0)$ is even as well, in both cases (a) and (b). Therefore, $s_{\omega}$ indeed attains even values on the red vertices. Regarding the blue vertices, we have $s_{\omega_3}(v)=\alpha(v)$ for all $v \in B$ and thus $s_{\omega}(v) = s_{\omega_1}(v)+s_{\omega_2}(v)+s_{\omega_3}(v)=f(v)$. By Lemma~\ref{lemma:weighting}, $f$ only attains odd values on $B$ and for any two neighbors $v,w \in B$ we have $f(v) \neq f(w)$. All together, under $\omega$ a coloring conflict can only arise between $v_0$ and $u_0$.

If $s_{\omega}(v_0) \neq s_{\omega}(u_0)$, there is nothing more to do and $s_{\omega}$ properly colors the vertices of $G$. So assume that the two values are equal. Our goal is to create a modified edge-weighting $\omega'$ without coloring conflicts, by only changing the weights on $p$, on $\{v_0, v_1\}$, and on $\{v_0, v_k\}$. We start with the edges that are incident to $v_1$ or $v_k$. There are three sub-cases.

\begin{itemize}
\item If $\beta(v_1)=\beta(v_k)=1$, we decrease the weights of $\{v_0,v_1\}$ and $\{v_0,v_k\}$ by $1$, so we put $\omega'(\{v_0,v_1)\}):=\omega'(\{v_0,v_k\}) := 1$. However, we also have $\alpha(v_1)=2\deg_R(v_1)-1$ and $\alpha(v_k)=2\deg_R(v_k)-1$, implying $\omega(\{v_1,v_2\}) \neq 3$ and $\omega(\{v_{k-1},v_k\}) \neq 3$ by properties (iv) and (vi) of Lemma~\ref{lemma:edgesbetween}. Hence we can \emph{increase} the weights of those two edges by $1$ in order to achieve that $s_{\omega'}(v_1)=s_{\omega}(v_1)$ and $s_{\omega'}(v_k)=s_{\omega}(v_k)$. At the same time, we have $s_{\omega'}(v_0) = s_{\omega}(v_0)-2$.
\item If $\beta(v_1)=\beta(v_k)=3$, we put $\omega'(\{v_0,v_1)\}):=\omega'(\{v_0,v_k\}) := 3$. Observe that here, it holds $\alpha(v_1)=2\deg_R(v_1)+1$ and $\alpha(v_k)=2\deg_R(v_k)+1$. By Lemma~\ref{lemma:edgesbetween}~(iv) and (vi), the weights of $\{v_1,v_2\}$ and $\{v_{k-1},v_k\}$ are not $1$, hence we can \emph{decrease} the weights of the two edges by $1$ to keep the weighted degrees of $v_1$ and $v_k$ the same, whereas $s_{\omega'}(v_0) = s_{\omega}(v_0)+2$.
\item If $\beta(v_1)\neq \beta(v_k)$, we assumed w.l.o.g.\ that $\beta(v_1)=3$ and $\beta(v_k)=1$. Recall that in this situation we put $\omega_2(\{v_0,v_1\})=3$ and $\omega_2(\{v_0,v_k\})=2$, implying $\alpha(v_1)=2\deg_R(v_1)$ and $\alpha(v_k)=2\deg_R(v_k)-1$. Then Lemma~\ref{lemma:edgesbetween} yields $\omega(\{v_1,v_2\}) \neq 3$ and $\omega(\{v_{k-1},v_k\} \neq 3$. We now \emph{decrease} the weights of $\{v_0,v_1\}$ and $\{v_0,v_k\}$ both by $1$ and \emph{increase} the weights of $\{v_1,v_2\}$ and $\{v_{k-1},v_k\}$ by $1$. Again, the weighted degrees of $v_1$ and $v_k$ remain the same, while $s_{\omega'}(v_0) = s_{\omega}(v_0)-2$.
\end{itemize}

In all three cases we achieved $s_{\omega'}(v_0) = s_{\omega}(v_0) \pm 2$. It remains to consider the internal nodes of $p$. So far, we changed the weighted degrees of $v_2$ and $v_{k-1}$ by $+1$ or $-1$ (or, if $k=3$, we  changed the weighted degree of $v_2=v_{k+1}$ by $+2$ or $-2$). For each internal node $v_i \in B$ of $p$ (if there are any), it holds $\omega(\{v_{i-1},v_i\})+\omega(\{v_i,v_{i+1}\}) \in \{3,4,5\}$ by Lemma~\ref{lemma:edgesbetween}~(v). Therefore, there is always a choice to modify the weights of both $\{v_{i-1},v_i\}$ and $\{v_i,v_{i+1}\}$ by $1$ while the weighted degree of $v_i$ remains the same. After these modifications, it holds $|\omega'(e)-\omega(e)|=1$ for each edge $e \in p$. Because $R$ is an independent set, the weighted degree of each internal node $v \in R$ of $p$ remains even and we did not create any new coloring conflicts. Since $u_0 \notin p$, it holds in particular $s_{\omega'}(u_1)=s_{\omega}(u_1)$, and we conclude that the coloring conflict between $v_0$ and $u_0$ has been solved by changing the edge weights along the cycle $\{v_0,v_1, v_2, \ldots, v_k,v_0\}$.
\end{proof}

Before starting with the main proof, we need one last minor lemma.

\begin{lemma}\label{lemma:blocks}
Let $G=(V,E)$ be a connected graph with minimum degree at least $2$. Then there exist two vertices $x,y \in V$ such that $\{x,y\} \in E$ and $G[V \setminus \{x,y\}]$ is connected.
\end{lemma}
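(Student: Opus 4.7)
The plan is to take any DFS tree $T$ of $G$ rooted at an arbitrary vertex, let $y$ be a deepest leaf of $T$, and let $x$ be its parent in $T$. Then $\{x,y\}\in E$ automatically, and the whole task reduces to showing that $G[V\setminus\{x,y\}]$ is connected, which I would do by exploiting the standard DFS fact that every non-tree edge of $G$ is a back-edge, going to a (proper) ancestor.

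First I would note that $\delta(G)\ge 2$ forces $|V|\ge 3$, and that $\deg(y)\ge 2$ together with simplicity forces $y$ to have a back-edge to some proper ancestor of $x$ (a back-edge to $x$ itself would create a multi-edge with the tree edge $\{x,y\}$). In particular, $x$ is not the root of $T$, so $T\setminus T_x$ is a non-empty, connected subtree of $T$. Next I would observe that because $y$ is a deepest leaf, every child $c\neq y$ of $x$ in $T$ is itself forced to be a leaf of $T$; otherwise the subtree $T_c$ would contain a leaf strictly deeper than $y$. Applying the same simplicity/$\delta$ argument to $c$, each such $c$ also has a back-edge to some proper ancestor of $x$. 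Finally I would analyse the forest obtained by deleting $x$ and $y$ from $T$: its components are the connected piece $T\setminus T_x$ together with one isolated vertex $\{c\}$ per sibling $c$ of $y$, and each such isolated $c$ is joined to $T\setminus T_x$ by a back-edge which survives in $G[V\setminus\{x,y\}]$. This yields the desired connectivity.

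The main point requiring care is ruling out the degenerate scenarios in which the construction could fail, namely that $x$ is the root of $T$, or that some sibling of $y$ has no back-edge reaching strictly above $x$. Either possibility would leave some piece of the forest isolated after the removal of $\{x,y\}$; but both are excluded purely by $\delta(G)\ge 2$ together with simplicity, since any leaf $z\in\{y\}\cup\{\text{siblings of }y\}$ has $x$ as its unique tree neighbor and at least one further edge, and that further edge, being a back-edge in a simple graph, must leave $T_x$. Once these corner cases are dealt with the rest of the argument is structurally immediate from the DFS tree, and notably the conclusion does not rely on $G$ being $2$-connected.
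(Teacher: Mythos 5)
Your proof is correct and follows essentially the same route as the paper's: a DFS tree, a deepest leaf, its parent, the observation that all siblings of that leaf are also leaves, and back-edges (forced by $\delta(G)\ge 2$ and simplicity) reconnecting everything to the rest of the tree. The only differences are that your labels $x$ and $y$ are swapped relative to the paper's, and that you spell out the corner cases (the parent not being the root, back-edges landing strictly above the parent) which the paper treats more tersely.
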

\begin{proof}
We do a depth-first-search on $G$ with arbitrary starting vertex $r$ and consider a leaf $x$ on the resulting search tree $T$ with largest depth, i.e., with largest distance on $T$ to the root $r$ among all nodes. Let $y$ be the parent of $x$ in $T$. We claim that $G[V \setminus \{x, y\}]$ is connected. 

Clearly, removing $x$ does not disconnect $G$. All other children of $y$ (if there are any) are leafs of $T$ as well, due to our choice of $x$. As they all have degree at least $2$, they are all connected to at least one other node in $G$. However, because we consider a depth-first search tree, all neighbors of children of $y$ must be ancestors of $y$ in $T$. Hence, we can also remove $y$ without destroying the connectivity of the remaining graph.
\end{proof}

\begin{proof}[Proof of Theorem~\ref{thm:main}]
Assume w.l.o.g.\ that $G$ is connected and contains at least three vertices. The plan is to show that it is always possible to find a good $R$-$B$-partition of either the entire graph $G$ or a large portion of it. Then the good $R$-$B$-partition should allow us to apply either Lemma~\ref{lemma:basicsituation}, Lemma~\ref{lemma:remainingvertex}, or Lemma~\ref{lemma:rearranging} to find a vertex-coloring edge-weighting. However, the fact that $|B|$ needs to be even  requires a careful preparation, leading to a subtle case distinction.

We first assume that there exists $x \in V$ with $\deg(x)=1$. Let $y$ be its only neighbor. Define $V' := V \setminus \{x\}$ and $G' := G[V']$. By Lemma~\ref{lemma:divide} applied to $G'$ and to the trivial path $p:=\{y\}$, there exists an independent set $R$ such that $y \in R$ and $G(R,V \setminus R)$ is connected. Let $B := V' \setminus R$. We distinguish two sub-cases. 

\begin{itemize}
\item If $|B|$ is odd, we can add $x$ to the set $B$ and make $|B|$ even. Afterwards, $(R,B)$ is a good $R$-$B$-partition of $G$, so we can apply Lemma~\ref{lemma:basicsituation} directly to find a vertex-coloring edge-weighting $\omega$ of $G$.
\item If $|B|$ is even, then $(R,B)$ is a good $R$-$B$-partition of $G'$. We apply Lemma~\ref{lemma:basicsituation} to $G'$ and receive a vertex-coloring edge-weighting $\omega$ of $G'$ where $s_{\omega}(y)$ is even. To extend $\omega$ to the full edge set $E$, we put $\omega(\{x,y\}):=2$ . Clearly, the weighted degree of $y$ remains even. As $R$ is an independent set, the only potential coloring conflict is between $x$ and $y$. But $\deg(x)=1$ and $\deg(y)>1$, hence $s_{\omega}(x)=2$, $s_{\omega}(y) > 2$, and $s_{\omega}$ is a proper vertex coloring of the full graph $G$.
\end{itemize}
 
For the remainder, we can assume that the minimum degree of $G$ is at least $2$. By Lemma~\ref{lemma:blocks}, there are two vertices $x,y \in V$ sharing an edge such that removing them does not disconnect the graph. We define $V'' := V \setminus \{x,y\}$ and $G'' := G[V'']$.

We first study the special situation where $\deg(x)=\deg(y)=2$. Let $z_x \in N(x) \setminus \{y\}$ and $z_y \in N(y) \setminus \{x\}$ be the two other neighbors of $x$ and $y$, where $z_x=z_y$ is possible. We apply Lemma~\ref{lemma:divide} to $G''$ and to the trivial path $p := \{z_x\}$, thereby requiring $z_x \in R$, and get an independent set $R$ such that $G(R,V \setminus R)$ is connected. We set $B := V'' \setminus R$ and distinguish four different sub-cases.

\begin{itemize}
\item If $|B|$ is even and $z_y \in R$, we add both $x$ and $y$ to $B$. $|B|$ remains even, $(B,R)$ is a good $R$-$B$-partition, and with Lemma~\ref{lemma:basicsituation}, we find an edge-weighting $\omega$ without coloring conflicts. 
\item If $|B|$ is even and $z_y \in B$ (and thus $z_x \neq z_y$), we put $y$ into $R$, so both neighbors of $x$ are in $R$ while $R$ remains an independent set. By Lemma~\ref{lemma:remainingvertex} applied to $G$ and $v_0 := x$, there exists a vertex-coloring $\omega$ without coloring conflicts.
\item If $|B|$ is odd and $z_y \in B$, we put $x$ into $B$ and $y$ into $R$ to obtain a good $R$-$B$-partition of $G$. Then we are done with Lemma~\ref{lemma:basicsituation}.
\item If $|B|$ is odd but $z_y \in R$, we create an auxiliary graph $\tilde{G}=(\tilde{V},\tilde{E})$ by adding an additional vertex $w$ and setting $\tilde{V} := V'' \cup \{w\}$ and $\tilde{E} := E(V'') \cup \{\{w,z_x\}\}$. After adding $w$ to $B$, $|B|$ becomes even. We apply Lemma~\ref{lemma:basicsituation} to $\tilde{G}$ and get a vertex-coloring edge-weighting $\tilde{\omega}$ for $\tilde{G}$ where $s_{\tilde{\omega}}(v)$ is even if and only if $v \in R$. Note that $\tilde{\omega}(\{w,z_x\})$ must be odd since $w \in B$.

Going back to the original graph $G$, we set $\omega(e) := \tilde{\omega}(e)$ for all edges $e \in E(V'')$. Moreover, let $\omega(\{y,z_y\}) := 2$ and $\omega(\{x,z_x\}) := 1$, making $s_{\omega}(z_x)$ and $s_{\omega}(z_y)$ even and thus keeping $V''$ vertex-coloring. Finally, if $s_{\omega}(z_x)=2$, we put $\omega(\{x,y\}):=3$, in all other cases we set $\omega(\{x,y\}):=1$. Then $s_{\omega}(x)$ is even, $s_{\omega}(y)$ is odd, and $s_{\omega}(x) \neq s_{\omega}(z_x)$, so $s_{\omega}$ is indeed vertex-coloring.
\end{itemize}

It remains to handle the situations where $\deg(x)+\deg(y) > 4$. We assume w.l.o.g.\ that $\deg(x)>2$ and again define specific vertices $z_x$ and $z_y$, this time together with a $z_x$-$z_y$-path $p$. Let $z_x \in N(x) \setminus \{y\}$ and $z_y \in N(y) \setminus \{x\}$, but now we require in addition that either $z_x = z_y$ (then $p$ would be the trivial path), or $\{x,z_y\} \notin E$, $\{y,z_x\} \notin E$, and there exists a shortest $z_x$-$z_y$-path $p$ in $G''$ without internal node that is connected to $x$ or $y$. The path $p$ is only needed for solving a few exceptional sub-cases. To obtain the independent set $R$, we apply Lemma~\ref{lemma:divide} to $G''$ and to the path $p$, this time requiring $z_x \notin R$. Having received the independent set $R$, we set $B := V'' \setminus R$, thus $z_x \in B$. Moreover, by Lemma~\ref{lemma:divide}~(ii), the path $p$ is alternating between $B$ and $R$. 

We start with the cases where $|B|$ is odd. First, we assume that at most one vertex of $x$ and $y$ has neighbors in $R$.

\begin{itemize}
\item If $\deg_R(x)=0$, no matter whether $\deg_R(y)=0$ or not, we put $x$ into $R$ and $y$ into $B$, consequently making $|B|$ even and keeping $R$ an independent set. Moreover, the graph $G(R,B)$ is connected thanks to the edge $\{x,y\}$. Hence, we have a good $R$-$B$-partition of $G$ and with Lemma~\ref{lemma:basicsituation} we directly find a suitable edge-weighting $\omega$.
\item If $\deg_R(y)=0$ and $\deg_R(x) \neq 0$, we put $y$ into $R$ and $x$ into $B$. Again, we find a suitable edge-weighting $\omega$ with Lemma~\ref{lemma:basicsituation}.
\end{itemize}
Next, suppose that $|B|$ is odd, $\deg_R(x) \ge 1$, $\deg_R(y) \ge 1$, and $\deg_R(x)+\deg_R(y) \ge 3$.
\begin{itemize}
\item If $\deg_R(x) \ge 2$, put $y$ into $B$. Then the bipartite subgraph $G(R,B)$ is connected and $\deg_B(x) \ge 1$, so we can apply Lemma~\ref{lemma:remainingvertex} to $G$ and to $v_0:=x$, no matter whether $\deg_R(x)$ is even or odd, and find a suitable edge-weighting $\omega$.
\item If $\deg_R(y) \ge 2$, the same argument as above applies by exchanging $x$ and $y$. Note that here, the assumption $z_x \in B$ does not have any impact.
\end{itemize}
We now assume that $|B|$ is odd, $\deg_R(x)=\deg_R(y)=1$, and $x$ and $y$ actually have a joint neighbor $q$ in $R$.
\begin{itemize}
\item If $N(x)\cap N(y) \cap R = \{q\}$, we create an auxiliary graph $\tilde{G}=(\tilde{V},\tilde{E})$ as follows. Let $\tilde{V} := V \cup \{w\}$, where $w$ is an additional vertex, and set $\tilde{E} := E \setminus \{\{x,y\},\{x,q\}\} \cup \{\{w,y\}\}$. Furthermore, put $x$ and $w$ into $R$ and $y$ into $B$. Because we removed the edge $\{x,q\}$, the set $R$ is an independent set. Moreover, thanks to $y$, $|B|$ becomes even, thus $(R,B)$ is a good $R$-$B$-partition of $\tilde{G}$. We apply Lemma~\ref{lemma:basicsituation} to $\tilde{G}$ and receive a vertex-coloring edge-weighting $\tilde{\omega}$ such that $s_{\tilde{\omega}}$ attains odd values exactly on $B$. Because $\deg_{\tilde{G}}(w)=1$, we have $\tilde{\omega}(\{w,y\})=2$. We now construct an edge-weighting $\omega$ for the original graph $G$ as follows. 

For each edge $e \in E \cap \tilde{E}$, we let $\omega(e)=\tilde{\omega}(e)$. On the two edges $\{x,q\}$ and $\{x,y\}$, we put weight $2$. We observe that $s_{\omega}(y)$ has now the same value in $G$ as $s_{\omega'}(y)$ had in $\tilde{G}$. Moreover, $s_{\omega}(q)$ is still even, hence the only potential coloring conflict is between $x$ and $q$. If $s_{\omega}(x) \neq s_{\omega}(q)$, we are done. Otherwise, let $\mu := \omega(\{y,q\})$.

If $\mu = 2$, we can modify $\omega$ by resetting $\omega(\{y,q\}):=3$, $\omega(\{x,y\}):=1$, and $\omega(\{x,q\}):=1$. The effect is that the weighted degrees of $y$ and $q$ remained the same whereas $s_{\omega}(x)$ decreased by $2$, so we resolved the coloring conflict between $q$ and $x$.

If $\mu$ is odd, we adapt $\omega$ by resetting $\omega(\{x,y\}):=\mu$, $\omega(\{x,q\}):=\mu$, and $\omega(\{y,q\}):=2$. Thereby, the weighted degree of $x$ changes by $\pm 2$ whereas all other weighted degrees remain the same. Thus, in both situations, the coloring conflict is resolved.
\end{itemize}
Under the assumption that $|B|$ is odd, it remains to handle the situations where $x$ and $y$ both have exactly one neighbor in $R$ but not a joint neighbor therein. Denote by $z_x'$ the neighbor of $x$ in $R$. Recall that in $G''$, there exists a $B$-$R$-alternating path $p$ from $z_x$ to $z_y$ without internal vertices that are connected to $x$ or $y$. 

\begin{itemize}
\item If $z_y \in R$, put $y$ into $B$ and observe that $(R,B)$ is a good $R$-$B$-partition of $G[V \setminus \{x\}]$. Let $p' := p \cup \{y,z_y\}$. Then the preconditions of Lemma~\ref{lemma:rearranging} are satisfied with $v_0:=x$, $u_0:=z_x'$, and $p'$, hence we find a vertex-coloring edge-weighting $\omega$.

\item If $z_y \in B$, let $T$ be a spanning tree of $G(R,B)$ containing $p$. By Lemma~\ref{lemma:divide}~(i), $G(R,B)$ is connected, so indeed this spanning tree exists. Let $z_y' \neq z_x'$ be the neighbor of $y$ in $R$. By construction, the path $p$ does not contain $z_x'$ or $z_y'$ as internal node. Therefore, on $T$ there exists a $z_y$-$z_x'$-path without $z_y'$ as internal vertex, or there exists a $z_x$-$z_y'$-path without $z_x'$ as internal vertex. Assume w.l.o.g.\ that we are in the first situation and denote by $p'$ our $z_y$-$z_x'$-path. Define $p'' := p' \cup \{x,z_x'\}$, put $x$ into $B$, and observe that by setting $v_0 := y$ and $u_0 := z_y'$, with the path $p''$ all preconditions of Lemma~\ref{lemma:rearranging} are fulfilled. Again, there exists an edge-weighting $\omega$ that properly colors the vertices of $G$.
\end{itemize}

We now turn to cases where $|B|$ is even. For the remainder of the proof, we don't need the path $p$ anymore, but we still use the vertex $z_x \in N(x) \cap B$. We start with the sub-cases where $\deg_R(x) \ge 1$.

\begin{itemize}
\item If $\deg_R(y) \ge 1$, we simply put $x$ and $y$ to $B$ to get a good $R$-$B$-partition of $G$. We directly find $\omega$ with Lemma~\ref{lemma:basicsituation} applied to $G$.
\item If $\deg_R(y) = 0$, we add $y$ to $R$ and achieve $\deg_R(x) \ge 2$. Thanks to $z_x$, $\deg_B(x)$ is at least $1$, allowing us to use Lemma~\ref{lemma:remainingvertex} with $v_0:=x$ no matter whether $\deg_R(x)$ is even or odd. Again, we find again a vertex-coloring edge-weighting $\omega$.
\end{itemize}
Finally, we are left with the situations where $|B|$ is even and the set $N(x) \cap R$ is empty. Due to the assumption that $\deg(x)>2$, we have $\deg_B(x) \ge 2$. We distinguish the following four sub-cases. 

\begin{itemize}
\item If $\deg_R(y)=0$, we add $y$ to $R$. Let $z_x' \in N(x) \setminus \{y,z_x\}$. Since $G(R,B)$ is connected, there exists a $z_x$-$z_x'$-path $p'$ in $G''$ that alternates between $B$ and $R$. We apply Lemma~\ref{lemma:rearranging} to $G$, to the fixed path $p'$, to $v_0:=x$, and to $u_0:=y$ and see that there exists an edge-weighting $\omega$ without coloring conflicts.
\item If $y$ has both neighbors in $R$ and in $B$, we add $x$ to $R$. Then $\deg_R(y) \ge 2$ and $\deg_B(y) \ge 1$, so we can apply Lemma~\ref{lemma:remainingvertex} with $v_0:=y$, no matter whether $\deg_R(y)$ is even or odd, and find a vertex-coloring edge-weighting $\omega$.
\item If $\deg_B(y)=0$ and $\deg(y)$ is even, we put $x$ into $R$. Observe that the value of $\deg_R(y)$ must be even in this sub-case. Thus, by Lemma~\ref{lemma:remainingvertex} applied to $v_0:=y$, there is an edge-weighting $\omega$ with the desired properties.
\item At last, if $\deg_B(y)=0$ and $\deg(y)$ is odd (and thus at least $3$), we have to be careful. Note that here, we have $N(x) \cap N(y) = \emptyset$. We create an auxiliary graph $\tilde{G}$ as follows. Let $\tilde{E} := E \setminus \{\{x,y\},\{y,z_y\}\} \cup \{x,z_y\}$ and let $\tilde{G} := (V,\tilde{E})$. We keep $R$ as it is, but we add both $x$ and $y$ to $B$. Observe that $\tilde{G}$ is connected, because the degree of $y$ was sufficiently large. Moreover, $(R,B)$ is a good $R$-$B$-partition of $\tilde{G}$ because $x \in B$ is connected to $R$ thanks to the edge $\{x,z_y\}$. With Lemma~\ref{lemma:basicsituation} we then obtain a vertex-coloring edge-weighting $\tilde{\omega}$ for $\tilde{G}$. Note that since $y \in B$, $s_{\tilde{\omega}}(y)$ is odd, so there exists at least one edge $\tilde{e}$ incident to $y$ where $\tilde{\omega}(\tilde{e})$ is odd. This edge $\tilde{e}$ will be helpful below.

Let $\mu := \tilde{\omega}(\{x,z_y\})$. We use $\tilde{\omega}$ to construct a weighting $\omega$ for $G$ as follows. We set $\omega(\{x,y\}) := \omega(\{y,z_y\}) := \mu$, whereas for all edges $e \in E \cap E'$, we keep the edge-weight from $\tilde{\omega}$. Consequently, for all $v \in V \setminus \{y\}$ we have $s_{\omega}(v)=s_{\tilde{\omega}}(v)$, whereas $s_{\omega}(y)=s_{\tilde{\omega}}(y)+2\mu$ remains odd. Because $N(y) \cap B = \{x\}$, the only potential coloring conflict is between the two nodes $x$ and $y$. However, if $x$ and $y$ attain the same weighted degree, we can replace the weight of edge $\tilde{e}$ by the other possible odd value. Then the conflict vanishes, and since the other endpoint of $\tilde{e}$ lies in the independent set $R$ of even-weighted nodes, we did not create any new conflicts.
\end{itemize}

No matter how the structure of the graph is at $x$ and $y$, we have seen by an exhaustive case distinction that it is always possible to reduce the situation to one of the three basic situations that have been covered by Lemma~\ref{lemma:basicsituation}, Lemma~\ref{lemma:remainingvertex}, and Lemma~\ref{lemma:rearranging}. Hence, for each connected graph $G=(V,E)$ with $|V| \ge 3$, the presented approach leads to a vertex-coloring edge-weighting $\omega$ only with weights from $\{1,2,3\}$.
\end{proof}

\section{Concluding remarks}\label{section:remarks}

With this paper, we present a constructive solution to the 1-2-3 Conjecture that is built on top of the ideas from \cite{keusch2023vertex} and uses the independent set $R$ of red nodes as additional key ingredient. We mentioned in Section~\ref{section:introduction} that the decision problem whether there exists a vertex-coloring edge-weighting only from $\{1,2\}$ is NP-hard \cite{dudek2011complexity}. In our proof, the critical step in terms of algorithmic complexity is to find a maximum cut $C=(S,T)$ in the proof of Lemma~\ref{lemma:weighting}. Finding a maximum cut is NP-complete, but our strategy can be adjusted as follows. 

Instead of starting directly with a maximum cut, we could begin with an arbitrary cut. Then we find either a sufficiently large flow, or a strictly larger cut, as demonstrated in the proof of Lemma~\ref{lemma:flow} in \cite{keusch2023vertex}. By repeating the argument, at some point we get a cut $C'=(S',T')$ that leads to a suitable flow, because the size of the cut can not become arbitrarily large. This should actually happen in polynomial time. We therefore believe that using the ideas of our proof, a polynomial-time algorithm for the construction of a suitable edge-weighting can be derived.

Throughout the paper, it was assumed that for each edge the weights are chosen from the set $\{1,2,3\}$. We think that all arguments work as long as the weight set has the form $\{a, a+1, a+2\}$, with $a \in \{1, 3, 5, \ldots\}$. However, it is reasonable to believe that the statement is true for all weight sets of the form $\{a, a+d, a+2d\}$ with $a \in \ZZ$ and $b \in \NN$, but such a generalization does not follow directly from the present proof.

Finally, it is natural to transfer the described technique to total weightings. However, it seems that additional ideas are needed to solve the conjecture of Przyby{\l}o and Wo\'{z}niak \cite{przybylo2010conjecture}. We leave this to future work.

\bibliographystyle{abbrv}
\bibliography{references}

\end{document}